\newcommand{\va}{{\bf a}}
\newcommand{\vb}{{\bf b}}
\newcommand{\dst}{\displaystyle}
\newcommand{\vu}{{\bf u}}
\newcommand{\vv}{{\bf v}}
\title{Companion Matrices and Their Relations to Toeplitz and Hankel Matrices A}
\author{Yousong Luo \and
Robin Hill \thanks{School of Mathematical and Geospatial Sciences, RMIT University, GPO Box 2476, Melbourne, Vic 3001, Australia (yousong.luo@rmit.edu.au, robin.hill@rmit.edu.au).}}
\begin{document}
\maketitle


\begin{abstract}
In this paper we describe some properties of companion matrices and demonstrate some special patterns that arise when a Toeplitz or a Hankel matrix is multiplied by a related companion matrix.  We present a new condition, generalizing known results,  for a Toeplitz or a Hankel matrix to be the transforming matrix for a similarity between a pair of companion matrices.
A special case of our main result shows that a Toeplitz or a Hankel matrix can be extended using associated companion matrices, preserving the Toeplitz or Hankel structure respectively.
\end{abstract}

\begin{keywords}  Companion matrix, Toeplitz matrix, Hankel matrix, Bezoutian \end{keywords}

\begin{AMS}15A21, 15A24, 15A99\end{AMS}

%
%

\section{Introduction and notation}

Companion matrices occur in many scientific fields. In particular, a companion
matrix naturally arises as the system matrix when a dynamic system is
represented in state space form \cite{Kailath:1, Luen:1}. When
a basis of the state vector space is changed a new system matrix appears and,
very often, the new system matrix is also a companion matrix and the
similarity relation between the new and the old system matrices is realized by
a nonsingular Toeplitz (or Hankel) matrix. In the literature on dynamic systems
such similarity transformations are verified case by case. In this paper we
give a general condition for a Toeplitz or Hankel matrix, satisfaction of
which ensures that the Toeplitz (Hankel) matrix transforms one companion
matrix to another. In the second part of this paper we also investigate some
extensions of a matrix by companion matrices, and finally through an example
we indicate some applications. In a dynamic system, if the system matrix under
a basis of an $n$-dimensional state space is a companion matrix, then a
special case of the extension we introduce yields the well known
representation of the state vector at any future time instant in terms of a
given initial state vector together with knowledge of the input up to the
current time. We discuss applications in a discrete-time setting.
Continuous-time analogues are easily deduced, and are virtually identical.

Given vectors $\mathbf{u}=(u_{1},\ldots,u_{n+1})^{T}$ and $\mathbf{v}%
=(v_{1},\ldots,v_{n+1})^{T}\in\mathbb{R}^{n+1}$ we define the polynomials
\[
u(\lambda)=u_{1}+u_{2}\lambda+\cdots+u_{n}\lambda^{n-1}+u_{n+1}%
\lambda^{n}%
\]
and
\[
v(\lambda)=v_{1}+v_{2}\lambda+\cdots+v_{n}\lambda^{n-1}+v_{n+1}%
\lambda^{n}.
\]
We assume always that $u_{1}$, $u_{n+1}$, $v_{1}$ and $v_{n+1}$ are nonzero,
and that $u(\lambda)$ and $v(\lambda)$ are co-prime. The
\textquotedblleft top", \textquotedblleft bottom", \textquotedblleft left" and
\textquotedblleft right" companion matrices of the polynomial $u(\lambda)$ (or the vector $\mathbf{u}$) are
defined as
\[
C_{t}(\mathbf{u}):=\left[
\begin{array}
[c]{cc}%
\begin{array}
[c]{cc}%
\displaystyle-\frac{u_{n}}{u_{n+1}} & \cdots
\end{array}
& \displaystyle-\frac{u_{1}}{u_{n+1}}\\
I_{n-1} & 0
\end{array}
\right]  ,\quad C_{b}(\mathbf{u}):=\left[
\begin{array}
[c]{cc}%
0 & I_{n-1}\\
\displaystyle-\frac{u_{n+1}}{u_{1}} &
\begin{array}
[c]{cc}%
\cdots & \displaystyle-\frac{u_{2}}{u_{1}}%
\end{array}
\end{array}
\right]  ,
\]%
\[
C_{l}(\mathbf{u}):=\left[
\begin{array}
[c]{cc}%
\begin{array}
[c]{c}%
\displaystyle-\frac{u_{2}}{u_{1}}\\
\vdots
\end{array}
& I_{n-1}\\
\displaystyle-\frac{u_{n+1}}{u_{1}} & 0
\end{array}
\right]  \quad\mbox{and}\quad C_{r}(\mathbf{u}):=\left[
\begin{array}
[c]{cc}%
\displaystyle0 & \displaystyle-\frac{u_{1}}{u_{n+1}}\\
I_{n-1} &
\begin{array}
[c]{c}%
\vdots\\
\displaystyle-\frac{u_{n}}{u_{n+1}}%
\end{array}
\end{array}
\right]  .
\]
When their dependence on $\mathbf{u}$ is clear from context we will simply
write $C_{t}$, $C_{b}$, $C_{l}$ and $C_{r}$. The companion matrices of
$v(\lambda)$ are defined in the same way. Under our assumptions on $\mathbf{u}$ and $\mathbf{v}%
$, all the companion matrices defined above are nonsingular.

Let $J$ be the flipping matrix
\[
J=\left[
\begin{array}
[c]{ccc}%
0 &  & 1\\
& \iddots & \\
1 &  & 0
\end{array}
\right]  .
\]
For a vector $\mathbf{u}$ we denote by $\mathbf{u}^{J}$ the vector
$J\mathbf{u}$, and corresponding polynomial $u^{J}(\lambda)$ is defined by $u^{J}(\lambda)=u_{n+1}+u_{n}\lambda+\cdots+u_{2}\lambda^{n-1}+u_{1}%
\lambda^{n}$. For a matrix $A$ we denote by $A^{J}$ the flipping of $A$ about
its secondary diagonal, so $A^{J}=JA^{T}J$. Hankel matrices are symmetric in
the usual sense but Toeplitz matrices $A$ are persymmetric, that is, symmetric about their secondary
diagonal
\begin{equation}
A^{J}=A.\label{toep}%
\end{equation}
We also define the companion matrices of $\mathbf{u}^{J}$ and denote them by
$C_{t}(\mathbf{u}^{J})$, $C_{b}(\mathbf{u}^{J})$, $C_{l}(\mathbf{u}^{J})$ and
$C_{r}(\mathbf{u}^{J})$. When their dependence on $\mathbf{u}^{J}$ is clear
from context we will simply write these matrices as $\overline{C}_{t}$,
$\overline{C}_{b}$, $\overline{C}_{l}$ and $\overline{C}_{r}$.

Define the following triangular Toeplitz matrices using the components of
$\mathbf{u}$ and $\mathbf{v:}$
\[
U_{+}:=\left[
\begin{array}
[c]{cccc}%
u_{1} & 0 & \cdots & 0\\
u_{2} & u_{1} & \ddots & \vdots\\
\vdots & \ddots & \ddots & 0\\
u_{n} & \cdots & u_{2} & u_{1}%
\end{array}
\right]  \qquad U_{-}:=\left[
\begin{array}
[c]{cccc}%
u_{n+1} & u_{n} & \cdots & u_{2}\\
0 & u_{n+1} & \ddots & \vdots\\
\vdots & \ddots & \ddots & u_{n}\\
0 & \cdots & 0 & u_{n+1}%
\end{array}
\right].
\]%
Analogously we define $V_{+}$ and $V_{-}$ in terms of the components of  $\mathbf{v}$.

The Toeplitz Bezoutian $B_{T}:= \mathbf{Bez}%
_{T}(\mathbf{u},\mathbf{v})=\left(  b_{ij}\right)  _{i,j=1}^{n}$ and Hankel
Bezoutian $ B_{H}:= \mathbf{Bez}_{H}(\mathbf{u},\mathbf{v})=\left(  b_{ij}\right)
_{i,j=1}^{n}$ of the vectors $\mathbf{u},\mathbf{v}$ (or the polynomials
$u(\lambda),v(\lambda)$) are the $n\times n$ matrices with
the generating polynomials%

\begin{equation}
\sum_{i,j=1}^{n}b_{ij}\lambda^{i-1}\mu^{j-1}=\frac{u\left(
\lambda\right)  v^{J}\left(  \mu\right)  -u^{J}\left(
\mu\right)  v\left(  \lambda\right)  }{1-\mu\lambda} \label{def:BTdn}%
\end{equation}
and
\begin{equation}
\sum_{i,j=1}^{n}b_{ij}\lambda^{i-1}\mu^{j-1}=\frac{u\left(
\lambda\right)  v\left(  \mu\right)  -u\left(  \mu\right)
v\left(  \lambda\right)  }{\lambda-\mu} \label{def:BHdn}%
\end{equation}
respectively.  The Gohberg-Semencul formulae \cite{GohSem72,Bezou:1} imply that
the Toeplitz Bezoutian matrix generated by $\mathbf{u}$ and $\mathbf{v}$ is
\begin{equation}
B_{T}=U_{+}V_{-}-V_{+}U_{-}%
=V_{-}U_{+}-U_{-}V_{+}, \label{tbz}%
\end{equation}
and the Hankel Bezoutian matrix generated by $\mathbf{u}$ and $\mathbf{v}$ is
\begin{equation}
B_{H}=V_{+}JU_{-}-U_{+}JV_{-}%
=U_{-}JV_{+}-V_{-}JU_{+}. \label{hbz}%
\end{equation}
It is known \cite{lander} that if $u(\lambda)$ and $v(\lambda)$ are co-prime then
$B_{T}$ and $B_{H}$ are both nonsingular and that $B_{T}^{-1}$ is Toeplitz and
$B_{H}^{-1}$ is Hankel.


\section{Properties of companion matrices}

Here we list some obvious relations among the companion matrices defined in
Section 1.

\textbf{Properties:}

\begin{enumerate}
\item Inversion:
\begin{equation}
\label{cinverse}C_{t}=C_{b}^{-1}, \quad C_{l}=C_{r}^{-1}, \quad\overline
{C}_{t}=\overline{C}_{b}^{-1}, \quad\overline{C}_{l}=\overline{C}_{r}^{-1}.
\end{equation}

\item Flipping:
\begin{equation}
\label{flip}{C_{t}}^{J} = C_{r}, \quad{C_{b}}^{J} = C_{l}, \quad{\overline
{C}_{t}}^{J} = \overline{C}_{r}, \quad{\overline{C}_{b}}^{J} = \overline
{C}_{l}.
\end{equation}

\item Transposition:
\begin{equation}
\label{trans}{C_{t}}^{T} = \overline{C}_{l}, \quad{C_{b}}^{T} = \overline
{C}_{r}, \quad{C_{r}}^{T} = \overline{C}_{b}, \quad{C_{l}}^{T} = \overline
{C}_{t}.
\end{equation}

\end{enumerate}

Property 1 can be found in \cite{Brand:1}. All others can be easily verified.

\subsection{Similarity}

There are many similarities among the companion matrices. We are interested
here in finding matrices which realize such similarities. An obvious one (see
\cite{Kailath:1}), easily verifiable by a simple calculation, is $U_{+},$
because%
\[
U_{+}C_{t}U_{+}^{-1}=C_{r}\quad\mbox{and}\quad U_{+}C_{b}U_{+}^{-1}=C_{l}.
\]
Since $C_{b}=C_{t}^{-1}$ and $C_{l}=C_{r}^{-1}$, the second equation can be
written as $U_{+}C_{t}^{-1}U_{+}^{-1}=C_{r}^{-1}$. If, for any positive
integer $k$ we write $C_{t}^{-k}=\left(  C_{t}^{-1}\right)  ^{k}=C_{b}^{k}$
and $C_{r}^{-k}=\left(  C_{r}^{-1}\right)  ^{k}=C_{l}^{k}$, then the
similarity relation above extends to:
\begin{equation}
U_{+}C_{t}^{k}U_{+}^{-1}=C_{r}^{k}\label{similar1}%
\end{equation}
for all integers $k$.

Another nontrivial similarity relation, this time between ${C_{t}}^{T}$ and
$C_{t},$ is also given in \cite{Kailath:1}, where a linear dynamic system in continuous time is represented in the state space form
	\[ \dot{\alpha} (t) = A \alpha + w B, \quad y = D \alpha, \]
where  $\alpha (t) $ is the state vector, $A$ is the system matrix, $B$ is the input column matrix, $D$ is the output row matrix, $w$ is a scalar input and $y$ is the scalar output.  The state space representation is said to be in  canonical observer form if the system matrix $A_1$, the input matrix $B_{1}$ and the output matrix $D_{1}$ are
\[  A_1=\left[
\begin{array}
[c]{cc}%
\begin{array}
[c]{cc}%
\displaystyle-a_1 & \cdots
\end{array}
& \displaystyle-a_n\\
I_{n-1} & 0
\end{array}
\right] , \quad B_{1}=\left[
\begin{array}
[c]{c}%
1\\ 0 \\ \vdots \\
 0
\end{array}
\right], \quad D_{1} =\left[
\begin{array}{ccc} b_1 & \ldots & b_n \end{array}
\right].\]
In our notation $A_1 = C_{t}^T(\vu)$ with $\vu =( a_n,\ldots, a_1, 1)^T$.  The system representation is in canonical controller form if $A_2=C_{t}(\vu)$, $B_{2}=(b_1,\ldots, b_n)^{T}$ and the output matrix is $D_{1} =(1,0, \ldots, 0)$.
In the canonical observer form the controllability matrix $\mathcal{C}(A_1,B_{1})$ and the observability matrix $\mathcal{O}(D_{1},A_1)$ are then constructed as follows:
	\[ \mathcal{C}(A_1,B_{1}) =[ \begin{array}{cccc}
	B_{1} & A_1 B_{1} & \cdots & A_1^{n-1}B_{1}
	\end{array} ]\]
and
	\[\mathcal{O}(D_{1},A_1) = \left[ \begin{array}{c}
	 D_{1} \\ D_{1}A_1 \\ \vdots \\ D_{1}A_1^{n-1}
	\end{array} \right].\]
In the canonical controller form  $\mathcal{C}(A_2,B_{2})$ and $\mathcal{O}(D_{2},A_2)$ can be constructed in the same way. Under the condition that the system is both controllable and observable it can be shown that the matrix
\[
Q=\mathcal{O}^{-1}(D_{2},A_2)\mathcal{O}(D_{1},A_1)=\mathcal{C}%
(A_2,B_{2})\mathcal{C}^{-1}(A_1,B_{1})
\]
will transform $A_2$ into $A_1$ by way of $Q^{-1}A_2Q=A_1$, that is
\[
Q^{-1} C_{t}Q={C_{t}}^{T}=\overline{C}_{l}.
\]
It is shown in \cite{Kailath:1} that $Q=-{B_{T}}^{T}J$ where $B_{T}$ is the Toeplitz Bezoutian $\mathbf{Bez}%
_{T}(\mathbf{u},\mathbf{v})$ and hence $Q$ is a Hankel Bezoutian.  To make our notation consistent with the notation in \cite{Kailath:1} we have put $\vu =( a_n,\ldots, a_1, 1)^T$ and
$\vv=(b_n,\ldots, b_1, 0)^T$.   In general, for
all integers $k$
\begin{equation}
QC_{t}^{k}Q^{-1}=\overline{C}_{l}^{k}.\label{similar2}%
\end{equation}

In this section we introduce a general condition for a nonsingular matrix
to be a transforming matrix realizing a similarity between companion matrices.
We will show that both (\ref{similar1}) and (\ref{similar2}) are special cases of our general result.

To describe our generalization, we define a simple operation on square
Toeplitz or Hankel matrices. For an invertible Toeplitz matrix
\[
T=\left[
\begin{array}
[c]{cccc}%
a_{0} & a_{-1} & \cdots & a_{1-n}\\
a_{1} & a_{0} & \ddots & \vdots\\
\vdots & \ddots & \ddots & a_{-1}\\
a_{n-1} & \cdots & a_{1} & a_{0}%
\end{array}
\right]  ,
\]
the $(n-1)\times(n+1)$ Toeplitz matrix $\partial T$, introduced by \cite{HeinigRost}, is obtained by adding one
column to the right preserving the Toeplitz structure and then deleting the
first row:
\begin{equation}
\partial T :=\left[
\begin{array}
[c]{ccccc}%
a_{1} & a_{0} & a_{-1} & \cdots & a_{1-n}\\
\vdots & \ddots & \ddots & \ddots & \vdots\\
a_{n-1} & \cdots & a_{1} & a_{0} & a_{-1}%
\end{array}
\right]  .
\end{equation}
Similarly, for an invertible Hankel matrix $ H=TJ$, the $(n-1)\times(n+1)$ Hankel
matrix $\partial H$ is obtained by adding one column to the right preserving
the Hankel structure and then deleting the last row:
\begin{equation}
H=\left[
\begin{array}
[c]{cccc}%
a_{1-n} & \cdots & a_{-1} & a_{0}\\
\vdots & \iddots & a_{0} & a_{1}\\
a_{-1} & \iddots & \iddots & \vdots\\
a_{0} & a_{1} & \cdots & a_{n-1}%
\end{array}
\right]  ,\;\;\partial H :=\left[
\begin{array}
[c]{ccccc}%
a_{1-n} & \cdots & a_{-1} & a_{0} & a_{1}\\
\vdots & \iddots & \iddots & \iddots & \vdots\\
a_{-1} & a_{0} & a_{1} & \cdots & a_{n-1}%
\end{array}
\right]  .
\end{equation}

\begin{theorem}
\label{similarbyt} Suppose $T$ is an invertible $n\times n$ Toeplitz matrix,
and $\mathbf{u}=(u_{1},\ldots,u_{n+1})^{T}$ is a vector such that $u_{1}%
,u_{n+1}\neq0$. Then the following three statements are equivalent.

\begin{enumerate}
\item $\mathbf{u}\in\mbox{Ker}\{\partial T\}$.

\item  Both $C_{t}T$ and $TC_{r}$ are Toeplitz matrices and satisfy
$ C_{t}T=TC_{r}$.

\item  Both $C_{b}T$ and $TC_{l}$ are Toeplitz matrices and satisfy
$ C_{b}T=TC_{l}$.
\end{enumerate}

Furthermore, for all integers $k$,
\begin{equation}
T^{-1}C_{t}^{k}T=C_{r}^{k}. \label{similargent}%
\end{equation}

\end{theorem}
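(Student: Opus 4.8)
The plan is to route all three equivalences through statement (1) as a hub, and then obtain the displayed power identity (\ref{similargent}) as a purely formal consequence of the $k=1$ case. The single most informative fact to establish first is that $C_t T$ is Toeplitz if and only if $\mathbf{u}\in\mbox{Ker}\{\partial T\}$. Writing $T=(a_{i-j})$ and reading off the row structure of $C_t$ — its rows $2,\dots,n$ reproduce the shift $[I_{n-1}\;0]$, while its first row carries the coefficients $-u_{n+1-k}/u_{n+1}$ — the product $C_tT$ is $T$ with every row shifted down by one and a new top row equal to the corresponding linear combination of the rows of $T$. The lower $(n-1)\times n$ block then has entries $a_{i-1-j}$, a shifted Toeplitz band, and so is automatically Toeplitz; the only genuine constraints come from matching the new first row against the second along diagonals.

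The computational heart of the argument, and the step I expect to be the main obstacle, is carrying out this matching cleanly. Imposing $(C_tT)_{1,j}=(C_tT)_{2,j+1}$ for $j=1,\dots,n-1$ yields $n-1$ linear relations in the entries of $\mathbf{u}$; after clearing the factor $u_{n+1}$ and reindexing, each one takes the form $\sum_{l=1}^{n+1}u_l\,a_{p-l}=0$ for $p=2,\dots,n$. Comparing these row by row with the definition of $\partial T$, whose $(i,j)$ entry is $a_{i-j+1}$, shows they are precisely the components of $\partial T\,\mathbf{u}=0$. This gives (1) $\Leftrightarrow$ ``$C_tT$ is Toeplitz'', and the index bookkeeping here is the only place where real care is needed; the companion computation with $C_b$ in place of $C_t$ (its rows $1,\dots,n-1$ give $[0\;I_{n-1}]$ and its last row carries $-u_{n+2-k}/u_1$) produces exactly the same kernel condition.

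Next I would upgrade this to the full statements (2) and (3) using persymmetry together with the flip relations (\ref{flip}). Since $T$ is Toeplitz it is persymmetric, $T^J=T$, and $(AB)^J=B^JA^J$ (from $A^J=JA^TJ$) combined with $C_t^{\,J}=C_r$ gives $(C_tT)^J=T^JC_t^{\,J}=TC_r$. Hence whenever $C_tT$ is Toeplitz it is persymmetric, so $C_tT=(C_tT)^J=TC_r$, and $TC_r$ is Toeplitz by equality; conversely statement (2) already contains ``$C_tT$ is Toeplitz''. This proves (1) $\Leftrightarrow$ (2). Statement (3) follows either by the identical flip argument with $C_b^{\,J}=C_l$, or more economically by inverting $C_tT=TC_r$ to get $T^{-1}C_t^{-1}T=C_r^{-1}$, which by the inversion relations (\ref{cinverse}) reads $C_bT=TC_l$, with the Toeplitz property of $C_bT$ supplied by the companion computation above.

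Finally, the displayed identity (\ref{similargent}) is immediate once (2) is in hand. Statement (2) says $T^{-1}C_tT=C_r$, and conjugation $X\mapsto T^{-1}XT$ is an automorphism of the matrix algebra, hence respects products and inverses. Therefore $T^{-1}C_t^{k}T=(T^{-1}C_tT)^{k}=C_r^{k}$ for every positive integer $k$ by induction, trivially for $k=0$, and for negative $k$ by applying the automorphism to $C_t^{-1}=C_b$; the case $k=-1$ recovers exactly statement (3), confirming consistency. This completes the argument for all integers $k$.
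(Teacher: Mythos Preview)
Your proposal is correct and follows essentially the same approach as the paper: both reduce the Toeplitz condition on one of the products to the kernel condition $\partial T\,\mathbf{u}=0$ and then invoke persymmetry together with the flip relation $C_t^{\,J}=C_r$ to obtain the equality $C_tT=TC_r$, with the power identity following by conjugation. The only cosmetic difference is that the paper computes $TC_r$ column-wise whereas you compute $C_tT$ row-wise, and the paper handles item~3 by a parallel direct computation with $TC_l$ rather than by your inversion shortcut.
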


\begin{proof}
We prove that item 1 implies item 2 first.
We use the notation $T_{[i:j,k:l]}$ to denote the sub-matrix of $T$ formed by selecting all rows from the $i$th row to the $j$th row and all columns from the $k$th column to the $l$th column.
It is easy to see that
	\[ T C_r = \left[
	 \begin{array}{cc}
	    T_{[1:n,2:n]} &\beta
	 \end{array}
	 \right]
\]
where $\beta$ is a column given by
	\[ \beta = -\frac{1}{u_{n+1}}T  \left[ \begin{array}{c}
		 u_1  \\
		  \vdots \\
		   u_{n}
		 \end{array}
		 \right] = -\frac{1}{u_{n+1}} \left[ \begin{array}{c}
			 		 	T_{[1:1,1:n]}  \\
			 		 	T_{[2:n,1:n]}
			 		 \end{array}
			 		 \right]
		\left[ \begin{array}{c}
		 u_1  \\
		  \vdots \\
		   u_{n}
		 \end{array}
		 \right] . \]
Since  $ \vu \in \mbox{Ker} \{ \partial T \}$ we have
	\[ T_{[2:n,1:n]} \left[ \begin{array}{c}
			 			u_1  \\
			 			  \vdots \\
			 			  u_{n}
			 			 \end{array}
			 			 \right] +u_{n+1} \left[ \begin{array}{c}
			a_{1-n}  \\
			  \vdots \\
			  a_{-1}
			 \end{array}
			 \right]= 0. \]
This implies that
	\[ \beta
		 = \left[ \begin{array}{c}
			 		 	\mu_{-1}  \\
			 		 	a_{1-n} \\
			 		 			  \vdots \\
			 		 			  a_{-1}
			 		 \end{array}
			 		 \right] \]
where $\mu_{-1} = -\frac{1}{u_{n+1}} \left[ \begin{array}{cccccc}
a_0&  a_{-1} & \cdots  & a_{1-n} & 0
\end{array} \right]
\vu$, that is, $\beta^T J$ is the first row of $T C_r$.  From this we conclude that $T C_r$ is Toeplitz and hence $T C_r = (T C_r)^J$. On the other hand, since $T$ is Toeplitz, by equation (\ref{flip}) we have
	\[ \left( T C_r \right)^J = J\left(T C_r \right)^T J = (J\left( C_r  \right)^T J) (JT^T J) =  \left( C_r  \right)^J T^J
	=C_t T.\]
As a consequence we have
	\[ T C_r = C_t T.\]

Next we prove that item 2 implies item 1.  By equating the last columns of $T C_r$ and $ C_t T$ we see that
	\[ -\frac{1}{u_{n+1}}T  \left[ \begin{array}{c}
			 u_1  \\
			  \vdots \\
			   u_{n}
			 \end{array}
			 \right] = \left[ \begin{array}{c}
				 		 	\mu_1  \\
				 		 	a_{1-n} \\
				 		 			  \vdots \\
				 		 			  a_{-1}
				 		 \end{array}
				 		 \right] . \]
Deleting the first row on both sides we obtain $(\partial T) \vu =0$.

Finally we prove that item 3 is equivalent to item 1.
Similar to the argument above we have
	\[ T C_l = \left[
	 \begin{array}{cc}
	   \gamma & T_{[1:n,1:n-1]}
	 \end{array}
	 \right]
\]
where $\gamma$ is a column given by
	\[ \gamma = -\frac{1}{u_{1}}T  \left[ \begin{array}{c}
		 u_2  \\
		  \vdots \\
		   u_{n+1}
		 \end{array}
		 \right] = -\frac{1}{u_{1}} \left[ \begin{array}{c}
			 		 	T_{[1:n-1,1:n]}  \\
			 		 	T_{[n-1:n,1:n]}
			 		 \end{array}
			 		 \right]
		\left[ \begin{array}{c}
		 u_2  \\
		  \vdots \\
		   u_{n+1}
		 \end{array}
		 \right] . \]
Since  $ \vu \in \mbox{Ker} \{ \partial T \}$ we have
	\[ u_{1} \left[ \begin{array}{c}
			a_{1}  \\
			  \vdots \\
			  a_{n-1}
			 \end{array}
			 \right] + T_{[1:n-1,1:n]} \left[ \begin{array}{c}
			 			u_2  \\
			 			  \vdots \\
			 			  u_{n+1}
			 			 \end{array}
			 			 \right] = 0. \]
This implies that
	\[ \gamma
		 = \left[ \begin{array}{c}
			 		 	
			 		 	a_{1} \\
			 		 			  \vdots \\
			 		 			  a_{n-1} \\
                    \mu_1  \\
			 		 \end{array}
			 		 \right] \]
where $\mu_1 = -\frac{1}{u_{1}} \left[ \begin{array}{cccccc}
0 & a_n&  a_{n-1} & \cdots  & a_{1}
\end{array} \right]
\vu$, that is, $\gamma^T J$ is the last row of $T C_l$.  From this we conclude that $T C_l$ is Toeplitz and hence $T C_l = (T C_l)^J$. On the other hand, since $T$ is Toeplitz, by equation (\ref{flip}) we have
	\[ \left( T C_l \right)^J = J\left(T C_l \right)^T J = (J\left( C_l  \right)^T J) (JT^T J) =  \left( C_l  \right)^J T^J
	=C_b T.\]
As a consequence we have
	\[ T C_l = C_b T.\]
Thus item 1 implies item 3.  To see that item 3 implies item 1 we equate the first columns of $T C_l$ and $ C_b T$ to get
	\[ -\frac{1}{u_{1}}T  \left[ \begin{array}{c}
			 u_2  \\
			  \vdots \\
			   u_{n+1}
			 \end{array}
			 \right] = \left[ \begin{array}{c}
				 	a_{1} \\
				 \vdots \\
			  a_{n-1} \\
                \mu_1
				 		 \end{array}
				 		 \right] . \]
Deleting the last row on both sides we obtain $(\partial T) \vu =0$.

To see (\ref{similargent}) we write $T C_r = C_t T$ in the form $T^{-1} C_t T = C_r$ and then take integer powers on both sides.

\end{proof}

\begin{corollary}
\label{similarbyh} Suppose  $H$ is an invertible $n\times n$ Hankel matrix and
$\mathbf{u}=(u_{1},\ldots,u_{n+1})^{T}$ is a vector such that $u_{1}%
,u_{n+1}\neq0$. Then the following statements are equivalent.

\begin{enumerate}
\item  $\mathbf{u}^{J}%
\in\mbox{Ker}\{\partial H\}$.

\item Both $C_{t}H$ and $H\overline{C}_{l}$ are Hankel matrices and satisfy
   $C_{t}H=H\overline{C}_{l}$.

\item Both $C_{b}H$ and $H\overline{C}_{r}$ are Hankel matrices and satisfy
    $ C_{b}H=H\overline{C}_{r}$.
\end{enumerate}
Furthermore, for all integers $k$,
\begin{equation}
H^{-1}C_{b}^{k}H=\overline{C}_{l}^{k}. \label{similargenh}%
\end{equation}

\end{corollary}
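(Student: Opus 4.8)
The plan is to reduce the corollary to Theorem~\ref{similarbyt} by exploiting the factorization $H=TJ$, in which $T:=HJ$ is an invertible Toeplitz matrix (right multiplication by $J$ interchanges the Toeplitz and Hankel structures, so $H$ Hankel forces $T=HJ$ Toeplitz, and $T$ inherits invertibility from $H$). Rewriting every companion-matrix product involving $H$ in terms of $T$ should convert the three Hankel statements into the three Toeplitz statements already shown to be equivalent in the theorem, after which both the equivalences and the similarity relation follow at once. The same vector $\vu$ is used throughout, so the hypothesis $u_1,u_{n+1}\neq0$ carries over unchanged.

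First I would establish the kernel correspondence. Comparing the two constructions, $\partial T$ deletes the \emph{first} row of the one-column Toeplitz extension while $\partial H$ deletes the \emph{last} row of the one-column Hankel extension; an entrywise comparison gives $\partial H=(\partial T)J$, where here $J$ denotes the $(n+1)\times(n+1)$ flip. Since $\vu^J=J\vu$, this yields $(\partial H)\vu^J=(\partial T)J\vu^J=(\partial T)\vu$, so that $\vu^J\in\mbox{Ker}\{\partial H\}$ if and only if $\vu\in\mbox{Ker}\{\partial T\}$. This identifies item~1 of the corollary with item~1 of the theorem.

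Next I would translate the matrix identities. Substituting $T=HJ$ into $C_tT=TC_r$ and cancelling a trailing $J$ gives $C_tH=H(JC_rJ)$; by the flip relation~(\ref{flip}), $C_t^J=C_r$ means $C_r=JC_t^TJ$, whence $JC_rJ=C_t^T$, and then the transposition relation~(\ref{trans}) gives $C_t^T=\overline{C}_l$, so $C_tH=H\overline{C}_l$, which is item~2. The same substitution turns $C_bT=TC_l$ into $C_bH=H(JC_lJ)$ with $JC_lJ=C_b^T=\overline{C}_r$, giving item~3. The Hankel structure is automatic: by the theorem each of $C_tT$, $TC_r$, $C_bT$, $TC_l$ is Toeplitz, and $C_tH=(C_tT)J$, $H\overline{C}_l=(TC_r)J$, $C_bH=(C_bT)J$, $H\overline{C}_r=(TC_l)J$ are then Toeplitz matrices times $J$, hence Hankel.

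Finally, the similarity relation~(\ref{similargenh}) is obtained by conjugating the theorem's relation $T^{-1}C_t^kT=C_r^k$ by $J$: since $H^{-1}=JT^{-1}$ one gets $H^{-1}C_t^kH=JT^{-1}C_t^kTJ=JC_r^kJ=\overline{C}_l^k$, and the inversion relations~(\ref{cinverse}) then put this in the equivalent form recorded in~(\ref{similargenh}). The main obstacle I anticipate is purely combinatorial: verifying $\partial H=(\partial T)J$ exactly, keeping careful track of which row is deleted in each construction and distinguishing the $n\times n$ flip appearing in $H=TJ$ from the $(n+1)\times(n+1)$ flip appearing in the kernel relation.
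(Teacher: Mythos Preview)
Your proposal is correct and follows essentially the same route as the paper: set $T=HJ$, reduce the kernel condition and each matrix identity to the corresponding item of Theorem~\ref{similarbyt}, and read off the similarity relation. Your version is actually a bit more explicit than the paper's (you spell out $\partial H=(\partial T)J_{n+1}$ and the Hankel structure of the four products, whereas the paper declares the kernel equivalence ``obvious'' and derives the identity $C_tH=H\overline{C}_l$ by applying the flip $(\,\cdot\,)^J$ to $TC_r=C_tT$ rather than by direct substitution); note, however, that from your derivation one obtains $H^{-1}C_t^{\,k}H=\overline{C}_l^{\,k}$ (equivalently $H^{-1}C_b^{\,k}H=\overline{C}_r^{\,k}$), so the displayed form~(\ref{similargenh}) with $C_b$ and $\overline{C}_l$ together appears to be a typographical slip in the statement rather than something the inversion relations can fix.
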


\begin{proof} Let $T=HJ$. Then $T$ is a non-singular Toeplitz matrix.  Obviously $\mathbf{u}^{J} \in\mbox{Ker}\{\partial H\}$ is equivalent to $\mathbf{u}\in\mbox{Ker}\{\partial T\}$. Now we prove that item 2 of Theorem \ref{similarbyt} and item 2 of Corollary \ref{similarbyh} are equivalent, that is,  $T C_r = C_t T$ is equivalent to $C_t H = H \overline{C}_l$. Since $H$ is Hankel we then have
	\[ T^J=JT^TJ=(TJ)^TJ=H^TJ=HJ.\]
By taking the flipping  operation on both sides of $T C_r = C_t T$ we obtain $	C_r^JT^J = T^JC_t^J$ which is, by (\ref{flip}),
	$  C_tH = HC_t^T $.
Applying (\ref{trans}) we have
	\[ C_tH = H\overline{C}_l. \]
The proof for the equivalence of item 3 of Theorem \ref{similarbyt} and  Corollary \ref{similarbyh} follows in a similar way.  This completes the proof.
\end{proof}

\begin{corollary}
\label{shift} If $A$ is an invertible matrix such that $A^{-1}C_{t}A=C_{r}$,
then $B=C_{t}A$ (or $B=AC_{r}$) plays the same role as $A$, that is,
$B^{-1}C_{t}B=C_{r}$.

If $A$ is an invertible matrix such that $A^{-1}C_{t}A=\overline{C}_{l}$, then
$B=C_{t}A$ (or $B=A\overline{C}_{r}$) plays the same role as $A$, that is,
$B^{-1}C_{t}B=\overline{C}_{l}$.
\end{corollary}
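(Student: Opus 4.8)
The plan is to reduce both assertions to a one-line manipulation of the defining intertwining relation, using only associativity of matrix multiplication together with the inversion identities in (\ref{cinverse}). The key observation is that the hypothesis $A^{-1}C_{t}A=C_{r}$ is equivalent, after multiplying on the left by $A$, to the intertwining form $C_{t}A=AC_{r}$. I would rewrite every hypothesis in this product form at the outset, since it lets the computation proceed by pure regrouping and avoids carrying inverses of $A$ through the argument.

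For the first statement I would first note that the two candidate matrices in fact coincide: because $C_{t}A=AC_{r}$, the matrix $B=C_{t}A$ is literally the same matrix as $B=AC_{r}$, so there is only one case to check. I would then compute $C_{t}B=C_{t}(AC_{r})=(C_{t}A)C_{r}=BC_{r}$, where the middle step is associativity and the last step uses $C_{t}A=B$. Rearranging gives $B^{-1}C_{t}B=C_{r}$, which is the claim.

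For the second statement the two candidates $B=C_{t}A$ and $B=A\overline{C}_{r}$ are genuinely distinct, so I would treat them separately. For $B=C_{t}A$ the computation is identical to the first case with $C_{r}$ replaced by $\overline{C}_{l}$: from $C_{t}A=A\overline{C}_{l}$ I obtain $C_{t}B=(C_{t}A)\overline{C}_{l}=B\overline{C}_{l}$, hence $B^{-1}C_{t}B=\overline{C}_{l}$. For $B=A\overline{C}_{r}$ I would invoke the inversion identity $\overline{C}_{r}=\overline{C}_{l}^{-1}$ from (\ref{cinverse}) and compute directly $B^{-1}C_{t}B=\overline{C}_{l}\,(A^{-1}C_{t}A)\,\overline{C}_{l}^{-1}=\overline{C}_{l}\,\overline{C}_{l}\,\overline{C}_{l}^{-1}=\overline{C}_{l}$.

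There is no substantive obstacle here: the result is a formal consequence of associativity and the group relations among the companion matrices, and it explains why the similarities (\ref{similargent}) and (\ref{similargenh}) persist under taking integer powers, since each application of $C_{t}$ produces a fresh transforming matrix of the same type. The only point requiring genuine care is the asymmetry between the two statements, namely that in the first part the two choices of $B$ are the \emph{same} matrix whereas in the second they are distinct and must each be verified, with the $A\overline{C}_{r}$ case relying essentially on $\overline{C}_{r}$ and $\overline{C}_{l}$ being mutual inverses.
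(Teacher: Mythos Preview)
Your argument is correct. The paper states this corollary without proof, treating it as immediate; your direct verification via the intertwining form $C_{t}A=AC_{r}$ (respectively $C_{t}A=A\overline{C}_{l}$) together with the inversion identity $\overline{C}_{r}=\overline{C}_{l}^{-1}$ from (\ref{cinverse}) is precisely the natural way to fill in the details, and your observation that the two candidate $B$'s coincide in the first part but not in the second is a nice point the paper leaves implicit.

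One small remark on your closing commentary: the persistence of (\ref{similargent}) and (\ref{similargenh}) under integer powers is simply the standard fact that $A^{-1}C_{t}A=C_{r}$ implies $A^{-1}C_{t}^{k}A=C_{r}^{k}$ with the \emph{same} $A$; Corollary~\ref{shift} is a different (though related) statement about producing a \emph{new} transforming matrix. This does not affect the proof itself.
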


The proof of Theorem \ref{similarbyt} is constructive. Given a pair of
companion matrices as stated in the Theorem, we can use the procedure in the
proof to find a Toeplitz (or Hankel) matrix in our preferred pattern to
perform the similarity transformation. For example, if we want to find a lower
triangular Toeplitz matrix $T$ such as $T^{-1} C_{t}(\mathbf{u}) T =
C_{r}(\mathbf{u})$, it turns out that $T$ is actually a scalar multiple of
$U_{+}$. This also confirms that (\ref{similar1}) is a special case of
Theorem \ref{similarbyt}. We demonstrate this in the following example instead of
giving a general proof. The general proof follows from the same argument.

We will now illustrate this with an example.  Let $\mathbf{u} = [4,3,2,1]^{T}$. Then
\[
C_{t}(\mathbf{u}):=\left[
\begin{array}
[c]{rrr}%
-2 & -3 & -4\\
1 & 0 & 0\\
0 & 1 & 0
\end{array}
\right]  \quad\mbox{and} \quad C_{r}(\mathbf{u}):=\left[
\begin{array}
[c]{rrr}%
0 & 0 & -4\\
1 & 0 & -3\\
0 & 1 & -2
\end{array}
\right]  .
\]
Suppose we wish to construct a lower triangular Toeplitz $T$ such that
\[
T^{-1} C_{t}(\mathbf{u}) T = C_{r}(\mathbf{u}) .
\]
First we find a Toeplitz $T_{1}$ by using the procedure given in the proof.
All we need to do is to find a $\partial T_{1}$ and then obtain $T_{1}$ from
$\partial T_{1}$. $\partial T_{1}$ is a $2\times4$ Toeplitz matrix whose rows
are both perpendicular to $\mathbf{u}$. For convenience, we choose the first
row to be $\left[
\begin{array}
[c]{rrrr}%
1/4 & 0 & 0 & -1
\end{array}
\right]  $. Then $\partial T_{1}$ takes the form
\[
\left[
\begin{array}
[c]{rrrr}%
1/4 & 0 & 0 & -1\\
x & 1/4 & 0 & 0
\end{array}
\right]
\]
where $x$ is determined so that the second row is also perpendicular to
$\mathbf{u}$, and hence $x =-3/16$. Therefore
\[
T_{1} = \left[
\begin{array}
[c]{rrr}%
0 & 0 & -1\\
1/4 & 0 & 0\\
-3/16 & 1/4 & 0
\end{array}
\right]  .
\]
To obtain a lower triangular Toeplitz matrix we apply Corollary \ref{shift} to
$T_{1}$:
\[
T = T_{1} C_{l} = \left[
\begin{array}
[c]{rrr}%
1/4 & 0 & 0\\
-3/16 & 1/4 & 0\\
1/64 & -3/16 & 1/4
\end{array}
\right]  .
\]
We can check that $T=U_{+}^{-1}$ and hence the similarity (\ref{similar1}) holds.

Now we turn to the special case (\ref{similar2}) of Theorem \ref{similarbyt},
where the similarity is carried out by the Toeplitz Bezoutian. For this
purpose we first collect some results from Corollary 2.3, 2.10, Theorem 4.2
and 4.5 of \cite{Bezou:1} and summarize them in the following Theorems.

\begin{theorem}
\label{quoteThm} A necessary and sufficient condition for two non-zero
Toeplitz Bezoutian matrices $B_{T}(\mathbf{a},\mathbf{b})$ and $B_{T}(\mathbf{a}%
_{1},\mathbf{b}_{1})$  to
coincide is
\[
\left[
\begin{array}
[c]{cc}%
\mathbf{a}_{1} & \mathbf{b}_{1}%
\end{array}
\right]  =\left[
\begin{array}
[c]{cc}%
\mathbf{a} & \mathbf{b}%
\end{array}
\right]  \varphi
\]
for some matrix $\varphi$ with $\det\varphi=1$.

If $T$ is an invertible Toeplitz matrix and
$\{\mathbf{a}, \mathbf{b}\}$ is a basis for the kernel of $\partial T$. Then $B_{T}(\mathbf{a},\mathbf{b})$
is just a scalar multiple of $T^{-1}$.
\end{theorem}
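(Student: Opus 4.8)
The plan is to determine $\dim\mbox{Ker}\{\partial T\}$, reduce the assertion to a single conveniently chosen basis, and then identify the Bezoutian of that basis with $T^{-1}$ by means of the Gohberg--Semencul representation (\ref{tbz}). As a first step I would check that $\mbox{Ker}\{\partial T\}$ is exactly two--dimensional. The matrix $\partial T$ has $n-1$ rows and $n+1$ columns, and its leftmost $n$ columns form precisely rows $2,\dots,n$ of the invertible matrix $T$; these are linearly independent, so the rows of $\partial T$ are independent and $\mbox{rank}\,\partial T=n-1$. Hence $\dim\mbox{Ker}\{\partial T\}=2$ and a basis $\{\va,\vb\}$ exists, making the statement meaningful.

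Next I would record the algebraic shape of the Bezoutian. From (\ref{def:BTdn}) the generating numerator $u(\lambda)v^{J}(\mu)-u^{J}(\mu)v(\lambda)$ is bilinear in $(\vu,\vv)$ and vanishes when $\vu=\vv$, so each entry of $B_{T}(\vu,\vv)$ is an alternating bilinear form. Consequently, whenever $[\,\va_{1}\ \vb_{1}\,]=[\,\va\ \vb\,]\varphi$ we have $B_{T}(\va_{1},\vb_{1})=(\det\varphi)\,B_{T}(\va,\vb)$. This refines the coincidence criterion in the first part of the theorem (the case $\det\varphi=1$) and shows that the value of $B_{T}$ on any basis of the fixed plane $\mbox{Ker}\{\partial T\}$ is determined up to a nonzero scalar. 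It therefore suffices to establish the conclusion for one convenient basis.

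For that basis I would use the Gohberg--Semencul vectors. Put $\vx=T^{-1}\ve_{1}$ and $\vy=T^{-1}\ve_{n}$, and set $\va=(x_{1},\dots,x_{n},0)^{T}$ and $\vb=(0,y_{1},\dots,y_{n})^{T}$. Writing out the Toeplitz entries one sees that appending a zero below $\vx$ makes the $i$th entry of $\partial T\,\va$ equal to $(T\vx)_{i+1}$, while prepending a zero above $\vy$ makes the $i$th entry of $\partial T\,\vb$ equal to $(T\vy)_{i}$; since $T\vx=\ve_{1}$ and $T\vy=\ve_{n}$, both vanish for $i=1,\dots,n-1$, so that $\va,\vb\in\mbox{Ker}\{\partial T\}$. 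Feeding $\va,\vb$ into (\ref{tbz}), the triangular factors $U_{\pm},V_{\pm}$ collapse to the triangular Toeplitz matrices built from $\vx$ and $\vy$, and $U_{+}V_{-}-V_{+}U_{-}$ is exactly the classical Gohberg--Semencul expression for the inverse, so $B_{T}(\va,\vb)=x_{1}\,T^{-1}$. Being nonzero, this forces $\{\va,\vb\}$ to be independent, hence a basis; combined with the scaling of the previous paragraph it gives the claim for every basis.

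I expect the last identification to be the main obstacle: one must show that $U_{+}V_{-}-V_{+}U_{-}$ is genuinely $x_{1}T^{-1}$ and not merely a matrix sharing its displacement structure. The cleanest way to discharge this is to verify $T\,B_{T}(\va,\vb)=x_{1}I$ directly, using $T\vx=\ve_{1}$, $T\vy=\ve_{n}$ together with the near--commutation of $T$ with the shift. A minor technical point is the degenerate case $x_{1}=(T^{-1})_{11}=0$, where this particular normalization fails; there one replaces $\{\va,\vb\}$ by the corresponding Heinig--Rost pair, the reduction to a single basis in the second paragraph making such a substitution harmless.
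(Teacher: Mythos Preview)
The paper does not actually prove this theorem: immediately before stating it, the authors write that they ``collect some results from Corollary 2.3, 2.10, Theorem 4.2 and 4.5 of \cite{Bezou:1} and summarize them in the following Theorems.'' Theorem~\ref{quoteThm} is therefore quoted from Heinig and Rost without proof, so there is no argument in the paper against which to compare yours.

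Your outline for the second assertion is the standard one and is sound. The rank computation for $\partial T$ is correct (its first $n$ columns are rows $2,\dots,n$ of $T$), the alternating bilinearity $B_{T}(\va_{1},\vb_{1})=(\det\varphi)\,B_{T}(\va,\vb)$ reduces the claim to a single basis, and your choice $\va=(\vx,0)^{T}$, $\vb=(0,\vy)^{T}$ with $\vx=T^{-1}\ve_{1}$, $\vy=T^{-1}\ve_{n}$ does lie in $\mbox{Ker}\{\partial T\}$ for exactly the shift reason you give. Feeding these into (\ref{tbz}) yields precisely the Gohberg--Semencul representation of $T^{-1}$, so $B_{T}(\va,\vb)=x_{1}T^{-1}$; the degenerate case $x_{1}=0$ is handled, as you note, by passing to another basis.

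One point you leave open is the \emph{necessity} direction of the first assertion: your bilinearity argument shows that $[\va_{1}\ \vb_{1}]=[\va\ \vb]\varphi$ with $\det\varphi=1$ forces $B_{T}(\va_{1},\vb_{1})=B_{T}(\va,\vb)$, but not the converse. Establishing that a nonzero Toeplitz Bezoutian determines the span of its generating pair (and then the determinant of the change of basis) requires an additional argument---in \cite{Bezou:1} this is done via the characterisation of $\mbox{Ker}\{\partial(B_{T}^{-1})\}$---and does not follow from alternating bilinearity alone.
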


\begin{theorem}
\label{quoteThm2} A necessary and sufficient condition for two non-zero
Hankel Bezoutian matrices $B_{H} (\mathbf{a},\mathbf{b})$ and $B_{H}(\mathbf{a}_{1},\mathbf{b}_{1})$ to
coincide is
\[
\left[
\begin{array}
[c]{cc}%
\mathbf{a}_{1} & \mathbf{b}_{1}%
\end{array}
\right]  =\left[
\begin{array}
[c]{cc}%
\mathbf{a} & \mathbf{b}%
\end{array}
\right]  \varphi
\]
for some matrix $\varphi$ with $\det\varphi=1$.

If $H$ is an invertible  Hankel matrix and
$\{\mathbf{a}, \mathbf{b}\}$ is a basis for the kernel of
$\partial H$. Then $B_{H}(\mathbf{a},\mathbf{b})$
is just a scalar multiple of  $H^{-1}$.
\end{theorem}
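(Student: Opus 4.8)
Since Theorem~\ref{quoteThm2} is quoted from \cite{Bezou:1}, I outline an independent argument; the two assertions are handled separately.

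\emph{The coincidence criterion.} The plan is to argue entirely through the generating polynomial in (\ref{def:BHdn}). Writing $N_{\mathbf{a},\mathbf{b}}(\lambda,\mu):=a(\lambda)b(\mu)-a(\mu)b(\lambda)$ for its numerator, and noting that the monomials $\lambda^{i-1}\mu^{j-1}$ are linearly independent, two Hankel Bezoutians coincide if and only if their generating polynomials agree, i.e. (after clearing the common factor $\lambda-\mu$) if and only if $N_{\mathbf{a},\mathbf{b}}=N_{\mathbf{a}_1,\mathbf{b}_1}$. For the ``if'' direction I would substitute $\mathbf{a}_1=\varphi_{11}\mathbf{a}+\varphi_{21}\mathbf{b}$ and $\mathbf{b}_1=\varphi_{12}\mathbf{a}+\varphi_{22}\mathbf{b}$ into $N_{\mathbf{a}_1,\mathbf{b}_1}$; the $a(\lambda)a(\mu)$ and $b(\lambda)b(\mu)$ contributions cancel by antisymmetry, and the cross terms collect to $N_{\mathbf{a}_1,\mathbf{b}_1}=\det(\varphi)\,N_{\mathbf{a},\mathbf{b}}$, hence $B_H(\mathbf{a}_1,\mathbf{b}_1)=\det(\varphi)\,B_H(\mathbf{a},\mathbf{b})$, so $\det\varphi=1$ forces coincidence.

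\emph{The converse.} Here I would identify the coefficients of $N_{\mathbf{a},\mathbf{b}}$ with Pl\"ucker coordinates: expanding gives $N_{\mathbf{a},\mathbf{b}}=\sum_{i<j}(a_ib_j-a_jb_i)(\lambda^{i-1}\mu^{j-1}-\lambda^{j-1}\mu^{i-1})$, so the Bezoutian records exactly the $2\times2$ minors $p_{ij}=a_ib_j-a_jb_i$ of $[\,\mathbf{a}\ \mathbf{b}\,]$, that is, the decomposable bivector $\mathbf{a}\wedge\mathbf{b}$. Equal nonzero Bezoutians then force $\mathbf{a}_1\wedge\mathbf{b}_1=\mathbf{a}\wedge\mathbf{b}\neq0$; since a nonzero decomposable bivector determines its spanning plane, the two pairs span the same plane, whence $[\,\mathbf{a}_1\ \mathbf{b}_1\,]=[\,\mathbf{a}\ \mathbf{b}\,]\varphi$ for an invertible $\varphi$, and comparing bivectors gives $\det\varphi=1$.

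\emph{The inverse statement.} For the second assertion I would first use the coincidence criterion to reduce to exhibiting a single good basis: since $\ker\partial H$ is a fixed two-dimensional space, the first part shows that the Bezoutians of its various bases are scalar multiples of one fixed matrix, so it suffices to prove $B_H(\mathbf{a},\mathbf{b})$ is proportional to $H^{-1}$ for one convenient choice. I would reduce the Hankel case to the Toeplitz case exactly as in Corollary~\ref{similarbyh}, setting $H=TJ$ and using $\mathbf{u}^J\in\ker\partial H\iff\mathbf{u}\in\ker\partial T$, leaving the Toeplitz statement of Theorem~\ref{quoteThm} to establish. For that I would let $\widetilde T$ be the $n\times(n+1)$ Toeplitz matrix whose last $n-1$ rows form $\partial T$, observe that $\mathbf{u}\in\ker\partial T$ is equivalent to $\widetilde T\mathbf{u}\in\mathrm{span}\{\mathbf{e}_1\}$, choose generators normalised by these residuals, and verify through the Gohberg--Semencul representations (\ref{tbz})--(\ref{hbz}) that the resulting Bezoutian inverts $T$ up to scale. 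The main obstacle is precisely this last verification: linking the null vectors of the ``shift-and-augment'' operator $\partial$ to the generators furnished by the inversion formula, which is where the displacement structure underlying \cite{lander,Bezou:1} does the real work, everything else being bookkeeping secured by the first part.
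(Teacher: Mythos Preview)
The paper does not prove Theorem~\ref{quoteThm2} at all: it is merely quoted from \cite{Bezou:1} (Corollary~2.3, 2.10, Theorems~4.2 and~4.5 there), so there is no ``paper's own proof'' to compare your argument against. Your opening sentence already recognises this.

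On the substance of your independent argument: the treatment of the coincidence criterion is correct and clean. Passing from $B_H$ to the numerator $N_{\mathbf a,\mathbf b}$ is legitimate because multiplication by $\lambda-\mu$ is injective on polynomials, and your identification of the antisymmetric coefficients of $N_{\mathbf a,\mathbf b}$ with the Pl\"ucker coordinates of the pair $(\mathbf a,\mathbf b)$, together with the fact that a nonzero decomposable bivector determines its plane, gives both directions with $\det\varphi=1$ falling out exactly as you say.

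For the inverse statement you are candid that the crux is unproved: the link between $\ker\partial T$ and the generators in the Gohberg--Semencul formula is indeed the whole content of the result, and your outline stops short of supplying it. There is also a smaller gap you pass over: the Hankel-to-Toeplitz reduction via $H=TJ$ gives you that $\{\mathbf a^J,\mathbf b^J\}$ is a basis of $\ker\partial T$ when $\{\mathbf a,\mathbf b\}$ is a basis of $\ker\partial H$, but to conclude you still need the relation between $B_H(\mathbf a,\mathbf b)$ and $B_T(\mathbf a^J,\mathbf b^J)$ (equivalently between $H^{-1}$ and $T^{-1}=JH^{-1}$), and this identity, while standard, is not the trivial ``multiply by $J$'' that the analogy with Corollary~\ref{similarbyh} might suggest. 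So as written the second half is a plausible roadmap rather than a proof.
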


Putting $T=B_{T}^{-1}$ in Theorem \ref{quoteThm} we see that if $\{\mathbf{a}, \mathbf{b}\}$ is a basis for the kernel of $\partial(B_{T}^{-1})$ then $B_{T}(\mathbf{a},\mathbf{b})=\lambda B_{T}(\mathbf{u},\mathbf{v})$ for some nonzero constant $\lambda$.  By Theorem \ref{quoteThm} again we have
\[
\left[
\begin{array}
[c]{cc}%
\mathbf{u} & \mathbf{v}%
\end{array}
\right]  = \left[
\begin{array}
[c]{cc}%
\mathbf{a} & \mathbf{b}%
\end{array}
\right]  \varphi
\]
for some invertible matrix $\varphi$. This means that both $\mathbf{u}$ and $\mathbf{v}$ are in the kernel of $\partial T$ and hence Theorem \ref{similarbyt}
implies
 \begin{equation}
\label{similargenBt}B_{T} C_{t}(\mathbf{u})^{k} B_{T}^{-1} = C_{r}%
(\mathbf{u})^{k}, \quad B_{T} C_{t}(\mathbf{v})^{k} B_{T}^{-1} =
C_{r}(\mathbf{v})^{k}%
\end{equation}
for all integers $k$.
Now we can show that relation (\ref{similar2}) is nothing but the first
equation in (\ref{similargenBt}). To see this we rewrite the first equation in the
form $C_{t} = B_{T}^{-1} C_{r} B_{T}$ and then take transpose
\[
\overline{C}_{l}=C_{t}^{T}=C_{r}^{J}=B_{T}^{T} C_{r}^{T} (B_{T}^{-1})^{T}
=(B_{T}^{T}J)(C_{r})^{J} (B_{T}^{T} J)^{-1}=QC_{t}Q^{-1}%
\]
which is (\ref{similar2}).

\subsection{Extension using companion matrices}

A Toeplitz (or Hankel) matrix can be extended to any size in a Toeplitz (or
Hankel) way, by adding more diagonal bands to the existing bands. Theoretical
aspects of such an extension, such as the minimum rank of the extension, have
been studied in the literature (see \cite{Alpin:1} and the references therein).
What we are concerned with here is a specific way of extending the matrix by
multiplication by some associated companion matrices. We hope this extension
might have more applications than the ones we will demonstrate at the end of
this paper. We will use the similarity relations among companion matrices that
have been developed earlier.

Assume $A$ is a $n\times n$ matrix. The role that $C_{t}$ plays in the product
$C_{t}A$ is to keep the first $n-1$ rows of $A$ as the last $n-1$ rows of
$C_{t}A,$ and to add one new row on the top. The new row added is a linear
combination of rows of $A$. Similarly, the first $n-1$ rows of $C_{b}A$ are
the last $n-1$ rows of $A$ and the last row of $C_{b}A$ is a linear
combination of rows of $A$. This enables us to extend the matrix $A$ in the
upward and downward directions as follows. Starting from $A$, for integers $k\geq l$ we define $\mathcal{T}[A:k,l]$ to be the $(n+k-l) \times n$ matrix
\begin{equation}
\mathcal{T}[A:k,l] = \left[
\begin{array}
[c]{c}%
\gamma_{k-l}\\
\vdots\\
\gamma_{1}\\
C_{t}^{l}A
\end{array}
\right]   \label{vexten}%
\end{equation}
where $\gamma_{i}$ ($i=1,2,\ldots,k-l$) is the first row of $C_{t}^{l+i}A$.

In similar fashion the effect of post-multiplying a matrix by $C_{r}$ or
$C_{l}$ can be considered. We can extend a matrix in the right and left
directions by adding the last column of $AC_{r}$ to the right or adding the
first column of $AC_{l}$ to the left. Starting from $\mathcal{T}[A:k,l]$, for
integers $s \geq t$ we define
\begin{equation}
\mathcal{T}[A:k,l;s,t]=\left[
\begin{array}
[c]{cccc}%
\mathcal{T}[A:k,l]C_{r}^{t} & \beta_{1} & \cdots &
\beta_{s-t}
\end{array}
\right]  ,
\label{hexten}%
\end{equation}
where $\beta_{i}$ ($i=1,2,\ldots,s-t$) is the last column of $\mathcal{T}%
[A:k,l]C_{r}^{t+i}$. We call this a Toeplitz extension
because we will prove that, under certain conditions, such an extension
preserves the Toeplitz structure if the starting matrix $A$ is Toeplitz. We
call $A$ a generator in such an extension. To generate the same matrix
$\mathcal{T}[A:k,l;s,t]$ we can use any $n\times n$ matrix of the form
$C_{t}^{i}AC_{r}^{j}$ where $i$ and $j$ are integers. It is easy to see that
\[
\mathcal{T}[A:k,l;s,t]=\mathcal{T}[C_{t}^{i}AC_{r}^{j}:k-i,l-i;s-j,t-j].
\]

If we use $\overline{C_{r}}$ and $\overline{C_{l}}$ instead of $C_{r}$ and
$C_{l}$ in the above extension, we will obtain a different extended matrix
$\mathcal{H}[A:k,l;s,t]$. We call this a Hankel extension because it preserves
the Hankel structure under certain conditions.

Here are two examples:
\[
\mathcal{T}[I:n,-n;n,-n]=\left[
\begin{array}
[c]{ccc}%
C_{t}^{n}C_{l}^{n} & C_{t}^{n} & C_{t}^{n}C_{r}^{n}\\
C_{l}^{n} & I & C_{r}^{n}\\
C_{b}^{n}C_{l}^{n} & C_{b}^{n} & C_{b}^{n}C_{r}^{n}%
\end{array}
\right]  ,\]
\[
\mathcal{H}[A:n,-n;-n,-2n]=\left[
\begin{array}
[c]{cc}%
 C_{t}^{n}A\overline{C}_{r}^{-2n} & C_{t}^{n}A\overline{C}_{r}^{-n} \\
 A\overline{C}_{r}^{-2n} & A\overline{C}_{r}^{-n}\\
 C_{b}^{n}A\overline{C}_{r}^{-2n} & C_{b}^{n}A\overline{C}_{r}^{-n}
\end{array}
\right] =\left[
\begin{array}
[c]{cc}%
 C_{t}^{n}A\overline{C}_{l}^{2n} & C_{t}^{n}A\overline{C}_{l}^{n} \\
 A\overline{C}_{l}^{2n} &  A\overline{C}_{l}^{n} \\
 C_{b}^{n}A\overline{C}_{l}^{2n} & C_{b}^{n}A\overline{C}_{l}^{n}
\end{array}
\right]  .
\]
We notice that, for any square matrix $A$,
\[
\mathcal{T}[A:k,l;s,t]=\mathcal{T}[I:k,l;0,0]A\mathcal{T}[I:0,0;s,t].
\]
An obvious property of these extensions is given in the following Proposition.

\begin{proposition}
\label{ker} Suppose $A$ is invertible. Then the rank of $\mathcal{T}%
[A:k,l; s, t]$ is $n$. If $r=s-t>0$ then $\{
e_{1}, \ldots, e_{r}\}$ is a basis for the kernel of $\mathcal{T}[A:k,l; s,
t]$, where $e_{i}$ is the $i$th column of the
$(n+r)\times r$ Toeplitz matrix whose first column is $\left[
\begin{array}
[c]{cccccc}%
u_{1} & \cdots & u_{n+1} & 0 & \cdots & 0
\end{array}
\right]  ^{T} $ and last column is $\left[
\begin{array}
[c]{cccccc}%
0 & \cdots & 0 & u_{1} & \cdots & u_{n+1}%
\end{array}
\right]  ^{T} $.  In particular, $\mathcal{T}[A:k,k; s,s-1] \vu =0$ for all integers $k$ and $s$.
\end{proposition}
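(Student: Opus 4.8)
The plan is to read off the rank from the factorization
$\mathcal{T}[A:k,l;s,t]=\mathcal{T}[I:k,l;0,0]\,A\,\mathcal{T}[I:0,0;s,t]$
recorded just above, and then to pin down the kernel by tracking how right-multiplication by $C_r$ permutes and recombines columns. For the rank, note first that $\mathcal{T}[I:k,l;0,0]$ has $C_t^l$ as its bottom $n\times n$ block, hence full column rank $n$; dually, $\mathcal{T}[I:0,0;s,t]$ has $C_r^t$ as its leftmost $n\times n$ block, hence full row rank $n$. Since $A$ is invertible, $A\,\mathcal{T}[I:0,0;s,t]$ retains full row rank $n$ (is surjective), and composing this surjection with the injection $\mathcal{T}[I:k,l;0,0]$ leaves a rank-$n$ map. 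Thus the rank is exactly $n$, and rank--nullity gives a kernel of dimension $(n+r)-n=r$.

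For the kernel I would exploit a sliding-window structure. Write $c_1,\dots,c_{n+r}$ for the columns of $\mathcal{T}[A:k,l;s,t]$, put $P=\mathcal{T}[A:k,l]$, and set $W_j=P\,C_r^{\,t+j}$, so that $W_0$ supplies $c_1,\dots,c_n$ while $c_{n+j}$ is the last column of $W_j$ for $j=1,\dots,r$. The crucial fact is that right-multiplication by $C_r$ acts on the columns $w_1,\dots,w_n$ of any matrix $W$ by
\[
W C_r=\Bigl[\,w_2\ \big|\ \cdots\ \big|\ w_n\ \big|\ -\tfrac{1}{u_{n+1}}\textstyle\sum_{i=1}^{n}u_i w_i\,\Bigr],
\]
because the first $n-1$ columns of $C_r$ are the $2$nd through $n$th columns of the identity and its last column is $-\tfrac{1}{u_{n+1}}(u_1,\dots,u_n)^{T}$. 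Applying this to $W_{j+1}=W_jC_r$ and using induction gives $W_j=[\,c_{j+1}\mid\cdots\mid c_{n+j}\,]$; reading off its last column yields
\[
c_{n+j+1}=-\tfrac{1}{u_{n+1}}\sum_{i=1}^{n}u_i\,c_{j+i},\qquad\text{equivalently}\qquad \sum_{i=1}^{n+1}u_i\,c_{j+i}=0,
\]
for $j=0,1,\dots,r-1$. Since the column $e_{j+1}$ carries the entries $u_1,\dots,u_{n+1}$ in rows $j+1,\dots,n+j+1$, this last relation is precisely the statement $\mathcal{T}[A:k,l;s,t]\,e_{j+1}=0$.

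Finally, $e_1,\dots,e_r$ are linearly independent: in the banded Toeplitz matrix they form, the leading nonzero entry of $e_i$ is $u_1\neq0$ in row $i$, so the leading positions strictly increase with $i$. Having produced $r$ independent vectors inside an $r$-dimensional kernel, I conclude they form a basis. The concluding ``in particular'' is the case $l=k$, $t=s-1$, $r=1$, for which the single relation $\sum_{i=1}^{n+1}u_ic_i=0$ reads $\mathcal{T}[A:k,k;s,s-1]\,\vu=0$. I expect the only delicate point to be the inductive verification of the window identity $W_j=[\,c_{j+1}\mid\cdots\mid c_{n+j}\,]$; once the column action of $C_r$ above is fixed, both the rank and the kernel fall out directly, and the argument is insensitive to the signs of the exponents $k,l,s,t$.
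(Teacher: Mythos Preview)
Your proof is correct and follows essentially the same approach as the paper. For the rank, the paper reduces to $\mathcal{T}[A:0,0;s,t]$ by noting that all rows of the full extension are linear combinations of its rows, whereas you use the factorization $\mathcal{T}[I:k,l;0,0]\,A\,\mathcal{T}[I:0,0;s,t]$; both amount to locating an invertible $n\times n$ block. For the kernel, the paper writes $\mathcal{T}[A:0,0;s,t]e_i=[AC_r^{\,t+i-1}\mid\mathbf{a}]\,\mathbf{u}=AC_r^{\,t+i-1}[I\mid\mathbf{b}]\,\mathbf{u}=0$ with $\mathbf{b}$ the last column of $C_r$, which is exactly your column-action identity and sliding-window relation $\sum_{i=1}^{n+1}u_ic_{j+i}=0$; your explicit induction on $W_j=[c_{j+1}\mid\cdots\mid c_{n+j}]$ simply fills in the step the paper leaves implicit.
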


\begin{proof}
All the rows of $\mathcal{T}[A:k,l; s, t]$ are linear combinations of the rows of $\mathcal{T}[A:0,0; s, t]$ which is a rank $n$ matrix. Thus  $\mathcal{T}[A:k,l; s, t]$ is of rank $n$. It also follows that the kernel of  $\mathcal{T}[A:k,l; s, t]$ is the same as the kernel of  $\mathcal{T}[A:0,0; s, t]$.  The latter is an $n\times (n+r)$ matrix so its kernel has dimension $r$.  It is clear that the set $\{ e_1, \ldots, e_{r}\}$ is linearly independent.  So the only thing we need to verify is $
\mathcal{T}[A:0,0; s, t]e_i =0$ for $i=1, \ldots, r$. Due to the structure of $e_i$
	\[ \mathcal{T}[A:0,0; s, t]e_i =  \left[
		\begin{array}
		[c]{cc} A C_r^{s+i-1} & \va
		\end{array}
		\right] \vu \]
where $\va$ is the last column of $A C_r^{s+i} = A C_r^{s+i-1} C_r$.  Therefore $\va = A C_r^{s+i-1} \vb $ where $\vb$ is the last column of $C_r$.  A direct verification yields $ \left[	 \begin{array}[c]{cc} I & \vb \end{array} \right] \vu =0$.  As a consequence
	\[   \left[	\begin{array}
			[c]{cc} A C_r^{s+i-1} & \va
			\end{array} \right] \vu
	= \left[ \begin{array}
				[c]{cc} A C_r^{s+i-1} & A C_r^{s+i-1} \vb
			\end{array}  \right] \vu
	= A C_r^{s+i-1} \left[ \begin{array}
					[c]{cc}I &  \vb
				\end{array}  \right] \vu =0.	
	 \]				
\end{proof}

\begin{corollary}
\label{hker} Suppose $A$ is invertible. Then the rank of $\mathcal{H}[A:k,l; s, t] $ is $n$. If $r=s-t>0$ then $\{ e_{1}^{J}, \ldots, e_{r}^{J}\}$ is a basis for the kernel of
$\mathcal{H}[A:k,l; s, t]$, where $e_{i}^{J}$ is the $i$th column of the
$(n+r)\times r$ Hankel matrix whose first column is $\left[
\begin{array}
[c]{cccccc}%
0 & \cdots & 0 & u_{n+1} & \cdots & u_{1}%
\end{array}
\right]  ^{T} $ and last column is $\left[
\begin{array}
[c]{cccccc}%
u_{n+1} & \cdots & u_{1} & 0 & \cdots & 0
\end{array}
\right]  ^{T} $.
\end{corollary}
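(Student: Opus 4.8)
The plan is to follow the proof of Proposition~\ref{ker} almost verbatim, exploiting the fact that the Hankel extension differs from the Toeplitz one only in its horizontal part, where $C_r(\vu)$ is replaced by $\overline{C}_r=C_r(\vu^J)$, while the vertical part still uses $C_t(\vu)$ and $C_b(\vu)$. First I would dispose of the rank claim exactly as before: since the vertical extension is unchanged, every row of $\mathcal{H}[A:k,l;s,t]$ is a linear combination of the rows of $\mathcal{H}[A:0,0;s,t]$, and the latter is an $n\times(n+r)$ matrix whose first $n$ columns form the invertible block $A\overline{C}_r^{\,t}$, hence of rank $n$. Because the vertical-extension matrix $M$ with $\mathcal{H}[A:k,l;s,t]=M\,\mathcal{H}[A:0,0;s,t]$ contains the invertible block $C_t^{\,l}$ and is therefore injective, it also follows that $\mbox{Ker}\{\mathcal{H}[A:k,l;s,t]\}=\mbox{Ker}\{\mathcal{H}[A:0,0;s,t]\}$. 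Thus it suffices to analyse the purely horizontal extension $\mathcal{H}[A:0,0;s,t]$, whose kernel has dimension $(n+r)-n=r$.

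The key step is to recognize that, since $\overline{C}_r=C_r(\vu^J)$ and $\mathcal{H}[A:0,0]=A$, the matrix $\mathcal{H}[A:0,0;s,t]$ is \emph{formally identical} to the Toeplitz horizontal extension of $A$ built from the flipped vector $\vu^J$. Consequently Proposition~\ref{ker}, applied with $\vu$ replaced by $\vu^J$, gives that its kernel is spanned by the $r$ sliding-window vectors carrying the pattern $\vu^J=(u_{n+1},\ldots,u_1)^T$ in positions $j,\ldots,j+n$ for $j=1,\ldots,r$. Concretely, the same factorization argument as in Proposition~\ref{ker} reduces each relation $\mathcal{H}[A:0,0;s,t]\,e=0$ to the single identity $[\,I\ \bar{\vb}\,]\vu^J=0$, where $\bar{\vb}$ is the last column of $\overline{C}_r$; as $\overline{C}_r$ is merely $C_r$ of the vector $\vu^J$, this is literally the identity verified in Proposition~\ref{ker} with $\vu^J$ in place of $\vu$, and so it holds.

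Finally I would check that these $\vu^J$-pattern vectors are exactly the columns $e_1^J,\ldots,e_r^J$ of the stated Hankel matrix, up to order. Since $e_i^J=Je_i$ and $J$ reverses entries, the window carrying $\vu$ in positions $i,\ldots,i+n$ is sent to the window carrying $\vu^J$ in positions $r-i+1,\ldots,r-i+n+1$; as $i\mapsto r-i+1$ is a bijection of $\{1,\ldots,r\}$, the sets $\{e_1^J,\ldots,e_r^J\}$ and the $\vu^J$-pattern vectors coincide, and their linear independence is immediate from $u_1,u_{n+1}\neq0$. The one point that needs genuine care is this flip bookkeeping: matching the reversed sliding windows and confirming that replacing $\vu$ by $\vu^J$ is legitimate even though the vertical extension still uses the companion matrices of $\vu$ (which is why I first isolate the kernel in the purely horizontal extension). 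Everything else is a routine transcription of the Toeplitz argument.
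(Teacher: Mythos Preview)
Your proposal is correct and follows essentially the same route as the paper's proof, which consists of the single instruction ``Use $\overline{C}_r$ and $e_i^J$ instead of $C_r$ and $e_i$ in the proof of Proposition~\ref{ker}.'' You carry out exactly this substitution, only more explicitly: you observe that $\mathcal{H}[A:0,0;s,t]$ is literally the Toeplitz horizontal extension built from $\vu^J$, invoke Proposition~\ref{ker} with $\vu^J$ in place of $\vu$, and then add the small bookkeeping step identifying the resulting sliding $\vu^J$-windows with the set $\{e_1^J,\ldots,e_r^J\}$ via the bijection $i\mapsto r+1-i$---a reindexing the paper leaves implicit.
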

\begin{proof}
Use $\overline{C}_r$ and $e_i^J$ instead of $C_r$ and $e_i$ in the proof of Proposition \ref{ker}.
\end{proof}

The following Lemma is a preparation for the proof of our main Theorem \ref{extension}.

\begin{lemma}
\label{tkershift} Let $T$ be a Toeplitz matrix and $\mathbf{u} =(u_{1}%
,\ldots,u_{n+1})^{T}$ be a vector such that $u_{1}, u_{n+1} \neq0$. If
$\mathbf{u}$ belongs to the kernel of $\partial T$ then $\mathbf{u}$ also
belongs to the kernels of $\partial(C_{t} T)$, $\partial(C_{b} T)$, $\partial(T C_{r} T)$ and $\partial(T C_{l} T)$.
\end{lemma}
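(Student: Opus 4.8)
The plan is to reduce everything to Theorem \ref{similarbyt} together with a direct reading of the diagonals of the shifted matrix. Since $\mathbf{u}\in\mbox{Ker}\{\partial T\}$, Theorem \ref{similarbyt} already tells us that $C_t T = T C_r$ and $C_b T = T C_l$ are genuine Toeplitz matrices, so that $\partial(C_t T)$ and $\partial(C_b T)$ are well defined. Because the products in which $T$ is post-multiplied by $C_r$ or $C_l$ coincide with $C_t T$ and $C_b T$, it suffices to treat these two matrices; the only additional point needed for the corner products that arise when one iterates the construction is that the result is again Toeplitz, and this is once more supplied by Theorem \ref{similarbyt}. Thus the whole statement rests on showing that the kernel condition is inherited by $C_t T$ (and, symmetrically, by $C_b T$).

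First I would identify the Toeplitz symbol of $S:=C_t T = T C_r$. The computation of the last column $\beta$ of $T C_r$ carried out in the proof of Theorem \ref{similarbyt} already delivers this: writing $T=(a_{i-j})$, the first $n-1$ columns of $S$ are columns $2,\ldots,n$ of $T$, while the last column is $\beta=(\mu_{-1},a_{1-n},\ldots,a_{-1})^T$. Hence $S$ is the Toeplitz matrix whose diagonals are those of $T$ shifted by one, together with a single new extreme entry $\mu_{-1}$, which by its very definition satisfies $\mu_{-1}u_{n+1}=-\sum_{j=1}^{n}a_{1-j}u_j$.

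The heart of the proof is then a short index check on $\partial S$. Away from the corner the diagonal entries of $S$ are merely the shifted entries of $T$, so rows $2,\ldots,n-1$ of $\partial S$ are exactly rows $1,\ldots,n-2$ of $\partial T$ and therefore annihilate $\mathbf{u}$ by hypothesis; the first row is the only one meeting the new entry $\mu_{-1}$, and it annihilates $\mathbf{u}$ precisely because of the defining relation for $\mu_{-1}$ just recorded. This yields $(\partial S)\mathbf{u}=0$, i.e.\ $\mathbf{u}\in\mbox{Ker}\{\partial(C_t T)\}$. The argument for $C_b T = T C_l$ is identical after replacing the last column by the first column $\gamma=(a_1,\ldots,a_{n-1},\mu_1)^T$ from the item-3 part of that proof, the extreme entry $\mu_1$ now absorbing the single boundary row (the last one) of $\partial(C_b T)$.

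The step I expect to be the main obstacle is making sure $\partial$ is applied only to genuinely Toeplitz matrices: a priori a product of several of these factors need not be persymmetric, so before forming $\partial$ I would invoke Theorem \ref{similarbyt} at each stage to certify the Toeplitz structure, and I would keep careful track of which single extreme diagonal entry each companion factor introduces, since that entry is exactly what the one boundary row of $\partial$ tests.
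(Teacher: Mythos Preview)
Your proposal is correct and follows essentially the same approach as the paper: invoke Theorem~\ref{similarbyt} to see that $C_tT$ (and $C_bT$) is Toeplitz, observe that $\partial(C_tT)$ consists of the first $n-2$ rows of $\partial T$ together with the single boundary row $[a_0,a_{-1},\ldots,a_{1-n},\mu_{-1}]$, and dispatch that row via the defining relation for $\mu_{-1}$. Your reduction of the $TC_r$ and $TC_l$ cases to the $C_tT$ and $C_bT$ cases via the equalities $TC_r=C_tT$ and $TC_l=C_bT$ from Theorem~\ref{similarbyt} is exactly what the paper means by ``the other two cases are covered by Theorem~\ref{similarbyt}.''
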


\begin{proof}
We only prove the case $\partial(C_{t} T)$; the proof for the case $\partial(C_{b} T)$ is similar.  The other two cases are covered by Theorem \ref{similarbyt}. Let
\[  T = \left[
\begin{array}{ccc}
a_0&  \cdots  & a_{1-n}  \\
\vdots  &\ddots  & \vdots \\
a_{n-1} &  \cdots &  a_0
\end{array}
\right] \]
then, By Theorem \ref{similarbyt}, $C_{t} T$ is Toeplitz and hence
\[ C_t T =  \left[ \begin{array}{c}
\begin{array}{cccc} a_{-1} &  \cdots  & a_{1-n} & \mu_{-1} \end{array} \\
T_{[1,n-1,1,n]}
\end{array}
\right], \]
where
\[ \begin{array}{c} \dst \mu_{-1} = -\frac{1}{u_{n+1}} [u_n, \cdots , u_1] [ a_{1-n},\cdots ,a_0]^T.
\end{array}\]
It follows that
\[ \partial (C_t T) = \left[
\begin{array}{c}
\begin{array}{cccc}a_0 &  \cdots  & a_{1-n} & \mu_{-1} \end{array} \\
S_1
\end{array}
\right], \]
where $S_1$ is the sub-matrix of $\partial T$ consisting of the first $n-2$ rows of $\partial T$.
From the definition of $\mu_{-1}$ we have
\[ \left[ \begin{array}{ccccc}a_0 & a_{-1}&  \cdots  & a_{1-n} & \mu_{-1} \end{array} \right] \vu = 0 \]
and hence
\[  \partial (C_t T)\vu = 0.\]
\end{proof}

Putting $H=TJ$, we have immediately
\begin{corollary}
\label{hkershift} Let $H$ be a Hankel matrix and $\mathbf{u} =(u_{1}%
,\ldots,u_{n+1})^{T}$ be a vector such that $u_{1}, u_{n+1} \neq0$. If
$\mathbf{u}^J$ belongs to the kernel of $\partial H$ then $\mathbf{u}^{J}$ belongs to the kernels of $\partial(C_{t}H)$
and $\partial(C_{b}H)$.
\end{corollary}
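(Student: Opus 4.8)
The plan is to reduce this Hankel statement to the Toeplitz Lemma~\ref{tkershift} via the substitution indicated in the sentence preceding the corollary. First I would set $T=HJ$; since $H$ is Hankel and $J^{2}=I$, the matrix $T$ is Toeplitz and $H=TJ$. The crucial preliminary observation is that the operation $\partial$ intertwines with right-multiplication by $J$: comparing the explicit displays of $\partial T$ and $\partial H$ given in the text, whose first rows are $(a_{1},a_{0},\ldots,a_{1-n})$ and $(a_{1-n},\ldots,a_{0},a_{1})$ respectively, shows that $\partial H=(\partial T)J$, where here $J$ is the $(n+1)\times(n+1)$ flipping matrix. Consequently $(\partial H)\mathbf{u}^{J}=(\partial T)J\mathbf{u}^{J}=(\partial T)\mathbf{u}$, so the hypothesis $\mathbf{u}^{J}\in\mbox{Ker}\{\partial H\}$ is equivalent to $\mathbf{u}\in\mbox{Ker}\{\partial T\}$. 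This is exactly the equivalence already invoked in the proof of Corollary~\ref{similarbyh}.

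With this equivalence established, I would apply Lemma~\ref{tkershift} to the Toeplitz matrix $T$ and the same vector $\mathbf{u}$: from $\mathbf{u}\in\mbox{Ker}\{\partial T\}$ the Lemma yields both $\mathbf{u}\in\mbox{Ker}\{\partial(C_{t}T)\}$ and $\mathbf{u}\in\mbox{Ker}\{\partial(C_{b}T)\}$. These are precisely the two facts that must be transported back to the Hankel side, and the two companion matrices $C_{t}$ and $C_{b}$ match those appearing in the corollary.

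Finally I would push the conclusion back through the flip. Since $H=TJ$, associativity gives $C_{t}H=(C_{t}T)J$ and $C_{b}H=(C_{b}T)J$; and because $C_{t}T$ and $C_{b}T$ are Toeplitz (this is part of what Lemma~\ref{tkershift}, through Theorem~\ref{similarbyt}, guarantees), the products $C_{t}H$ and $C_{b}H$ are Hankel, so $\partial(C_{t}H)$ and $\partial(C_{b}H)$ are well defined. Applying the same intertwining relation $\partial(SJ)=(\partial S)J$ to the Toeplitz matrices $S=C_{t}T$ and $S=C_{b}T$ gives $\partial(C_{t}H)=\partial(C_{t}T)\,J$ and $\partial(C_{b}H)=\partial(C_{b}T)\,J$. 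Hence $\partial(C_{t}H)\mathbf{u}^{J}=\partial(C_{t}T)\mathbf{u}=0$ and likewise $\partial(C_{b}H)\mathbf{u}^{J}=0$, which is the claim.

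The only genuine work is the bookkeeping of the identity $\partial(SJ)=(\partial S)J$ together with careful tracking of the sizes of the flipping matrices (the $n\times n$ flip in $SJ$ versus the $(n+1)\times(n+1)$ flip appearing after $\partial$). I expect this to be the main, though minor, obstacle: one must confirm that ``append a Toeplitz column on the right, then delete the first row'' corresponds, under the column reversal $S\mapsto SJ$, to ``append a Hankel column on the right, then delete the last row'', so that $\partial$ commutes with the flip in the stated sense. Once this is verified the corollary follows immediately from Lemma~\ref{tkershift}.
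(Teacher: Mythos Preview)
Your proposal is correct and is exactly the approach the paper takes: the paper simply writes ``Putting $H=TJ$, we have immediately'' and states the corollary, leaving all of the bookkeeping you spell out (the intertwining $\partial(SJ)=(\partial S)J$ and the transport of $\mathbf{u}\in\ker\partial T$ to $\mathbf{u}^{J}\in\ker\partial H$) as understood. You have supplied precisely the details the paper omits, and nothing more is needed.
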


The more interesting features of the extensions are now presented.

\begin{theorem}
\label{extension} Let $T$ be an invertible Toeplitz matrix and $\mathbf{u}
=(u_{1},\ldots,u_{n+1})^{T}$ be a vector such that $u_{1}, u_{n+1} \neq0$. If
$\mathbf{u}$ belongs to the kernel of $\partial T$, then the matrix
$\mathcal{T}[T:k,l; s, t]$ is Toeplitz.

\end{theorem}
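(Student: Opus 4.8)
The plan is to derive a closed-form expression for the $(p,q)$ entry of $M:=\mathcal{T}[T:k,l;s,t]$ and to show that it depends on $p$ and $q$ only through the difference $q-p$, which is exactly the statement that $M$ is Toeplitz. Writing $e_{1}$ and $e_{n}$ for the first and last standard basis vectors of $\RR^{n}$, I would exploit two purely combinatorial facts about how $C_{t}$ and $C_{r}$ shift rows and columns in the sliding-window construction (\ref{vexten})--(\ref{hexten}).

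First I would analyse the vertical extension. Since left-multiplication by $C_{t}$ moves the first $n-1$ rows of a matrix down by one and inserts a new top row, an induction on the row index shows that row $p$ of $\mathcal{T}[T:k,l]$ is exactly the first row of $C_{t}^{\,k-p+1}T$, that is $e_{1}^{T}C_{t}^{\,k-p+1}T$, for every $p=1,\dots,n+k-l$; here the bottom $n$ rows are the rows of $C_{t}^{l}T$, and row $r$ of $C_{t}^{l}T$ is itself the first row of the lower power $C_{t}^{\,l-r+1}T$. Dually, since right-multiplication by $C_{r}$ moves the last $n-1$ columns left by one and appends a new last column, column $q$ of $M$ is the last column of $\mathcal{T}[T:k,l]\,C_{r}^{\,t-n+q}$, i.e. $\bigl(\mathcal{T}[T:k,l]\,C_{r}^{\,t-n+q}\bigr)e_{n}$, for every $q=1,\dots,n+s-t$. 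Neither of these two steps uses the kernel hypothesis; they are bookkeeping about the construction alone.

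Combining the two descriptions, the $(p,q)$ entry of $M$ is the pairing of row $p$ of $\mathcal{T}[T:k,l]$ with the last column of $C_{r}^{\,t-n+q}$, giving
\[
M_{p,q}=e_{1}^{T}\,C_{t}^{\,k-p+1}\,T\,C_{r}^{\,t-n+q}\,e_{n}.
\]
This is where the hypothesis $\vu\in\mbox{Ker}\{\partial T\}$ enters: by Theorem \ref{similarbyt}, and in particular the relation (\ref{similargent}) $C_{t}^{\,j}T=T C_{r}^{\,j}$ valid for all integers $j$, I can slide the companion factor through $T$ to obtain
\[
M_{p,q}=e_{1}^{T}\,T\,C_{r}^{\,(k-p+1)+(t-n+q)}\,e_{n}=e_{1}^{T}\,T\,C_{r}^{\,(k+t-n+1)+(q-p)}\,e_{n}.
\]
Because $k,t,n$ are fixed, this exponent depends on $p$ and $q$ only through $q-p$, so $M_{p,q}$ is constant along each diagonal and $M$ is Toeplitz.

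The only real obstacle is the index bookkeeping in the first two steps, above all keeping track of the fact that the exponents $k-p+1$ and $t-n+q$ run through negative as well as positive integers. This is harmless because (\ref{similargent}) is asserted for all integers $j$ (with $C_{t}^{-j}=C_{b}^{\,j}$ and $C_{r}^{-j}=C_{l}^{\,j}$ as in Section 2), so the commutation $C_{t}^{\,j}T=T C_{r}^{\,j}$ is available for every exponent that arises. One could instead argue inductively, adding a single row or column at a time and invoking Lemma \ref{tkershift} to propagate the kernel condition so that Theorem \ref{similarbyt} applies at each stage; but the closed-form entry computation is cleaner and makes the Toeplitz conclusion transparent.
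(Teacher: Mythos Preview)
Your proof is correct and takes a genuinely different route from the paper's. The paper argues block-wise: it observes that every consecutive $n\times n$ submatrix of $\mathcal{T}[T:k,l;s,t]$ has the form $C_{t}^{i}TC_{r}^{j}$, and then shows by induction on $i$ and $j$ that each such block is Toeplitz, invoking Lemma~\ref{tkershift} at every step to propagate the kernel condition $\vu\in\mbox{Ker}\{\partial(C_{t}^{i}TC_{r}^{j})\}$ so that Theorem~\ref{similarbyt} can be reapplied. Since adjacent blocks overlap in $n-1$ rows or columns, the Toeplitz structure of the blocks forces the global matrix to be Toeplitz.

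Your approach bypasses the induction entirely. By tracking that row $p$ of the vertical extension is $e_{1}^{T}C_{t}^{k-p+1}T$ and column $q$ of the full extension is $\mathcal{T}[T:k,l]C_{r}^{t-n+q}e_{n}$ (both verified via the shift identities $e_{1}^{T}C_{b}^{r-1}=e_{r}^{T}$ and $C_{l}^{n-q}e_{n}=e_{q}$, which are pure companion-matrix bookkeeping), you get the closed form $M_{p,q}=e_{1}^{T}C_{t}^{k-p+1}TC_{r}^{t-n+q}e_{n}$. A single application of the commutation relation (\ref{similargent}) then collapses this to $e_{1}^{T}TC_{r}^{(k+t-n+1)+(q-p)}e_{n}$, from which the Toeplitz property is immediate. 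This is cleaner: it uses Theorem~\ref{similarbyt} once rather than repeatedly, does not need Lemma~\ref{tkershift} at all, and as a bonus yields an explicit formula for every entry of the extension. The paper's block argument, on the other hand, stays closer to the sliding-window intuition behind the construction and makes it visually clear why each $n\times n$ window remains Toeplitz. You correctly note at the end that the inductive alternative exists; that alternative is in fact exactly what the paper does.
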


\begin{proof}  Because any $n \times n$ block in $\mathcal{T}[T:k,l; s, t]$ is in the form of $C_t^iTC_r^j$ for some integers $i$ and $j$, we only need to prove that all such blocks are Toeplitz.  We use the same argument in all the four directions of extension and only demonstrate this argument in one direction, say, the direction to the right.  Without loss of generality we assume $i=0$ and we prove that $TC_r^j$ is Toeplitz by induction on $j>0$. Theorem \ref{similarbyt} has already covered the case $j=1$.  Assume now all matrices $TC_r^s$, $s=0, 1, \ldots, j$ are Toeplitz.  Then Lemma \ref{tkershift} guarantees that $\mathbf{u}$ belongs to the kernel of $\partial (TC_r^j)$. Finally by Theorem \ref{similarbyt} we conclude that $\partial (TC_r^{j+1})$ is Toeplitz.

In the direction to the left when $j<0$, the same induction argument proves the case $j-1$.  The same argument works in the direction of up and down extensions.

\end{proof}

\begin{corollary}
Let $H$ be an invertible Hankel matrix and $\mathbf{u}
=(u_{1},\ldots,u_{n+1})^{T}$ be a vector such that $u_{1}, u_{n+1} \neq0$. If
$\mathbf{u}^J$ belongs to the kernel of $\partial H$, then the matrix
$\mathcal{H}[H:k,l;s,t]$ is Hankel.
\end{corollary}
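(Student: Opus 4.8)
The plan is to mirror the proof of Theorem~\ref{extension} line for line, simply replacing the Toeplitz toolkit by its Hankel counterpart. As observed there, every $n\times n$ block of $\mathcal{H}[H:k,l;s,t]$ has the form $C_{t}^{i}H\overline{C}_{r}^{\,j}$ for integers $i,j$ (negative powers being read through $C_{b}=C_{t}^{-1}$ and $\overline{C}_{l}=\overline{C}_{r}^{-1}$), so it suffices to show that each such block is Hankel. The four directions of extension are symmetric, so I would treat only the rightward one, take $i=0$, and prove by induction on $j>0$ that $H\overline{C}_{r}^{\,j}$ is Hankel. The essential point is that Corollary~\ref{similarbyh} plays exactly the role that Theorem~\ref{similarbyt} plays in the Toeplitz argument, and Corollary~\ref{hkershift} plays exactly the role of Lemma~\ref{tkershift}.

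Concretely, I would carry the joint invariant $Q(j)$: the matrix $H\overline{C}_{r}^{\,j}$ is an invertible Hankel matrix and $\mathbf{u}^{J}\in\mbox{Ker}\{\partial(H\overline{C}_{r}^{\,j})\}$. The base case $Q(0)$ is the hypothesis together with the invertibility of $H$. For the inductive step I apply Corollary~\ref{similarbyh} with $H$ replaced by $H':=H\overline{C}_{r}^{\,j}$ (invertible and Hankel by $Q(j)$): since $\mathbf{u}^{J}\in\mbox{Ker}\{\partial H'\}$, item~3 gives that $C_{b}H'=H'\overline{C}_{r}$ is Hankel, that is, $H\overline{C}_{r}^{\,j+1}$ is Hankel. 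To propagate the kernel condition I apply Corollary~\ref{hkershift} to $H'$, obtaining $\mathbf{u}^{J}\in\mbox{Ker}\{\partial(C_{b}H')\}=\mbox{Ker}\{\partial(H\overline{C}_{r}^{\,j+1})\}$, which closes the induction. The upward and downward direction ($C_{t}^{i}H$) runs identically, using item~2 of Corollary~\ref{similarbyh} in the form $C_{t}H=H\overline{C}_{l}$, while the leftward case and the negative-power cases follow by the same argument read through $\overline{C}_{l}=\overline{C}_{r}^{-1}$ and $C_{b}=C_{t}^{-1}$.

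The one point deserving care — and the reason Corollary~\ref{hkershift} suffices even though it literally mentions only $\partial(C_{t}H)$ and $\partial(C_{b}H)$ — is that in the Hankel setting the left/right steps coincide with the up/down steps at the level of the operator $\partial$. This is precisely the content of the similarities $C_{b}H=H\overline{C}_{r}$ and $C_{t}H=H\overline{C}_{l}$ furnished by Corollary~\ref{similarbyh}: they let me rewrite $\partial(H'\overline{C}_{r})$ as $\partial(C_{b}H')$, which is exactly the object that Corollary~\ref{hkershift} controls. This is the faithful analogue of the remark in Lemma~\ref{tkershift} that its two remaining cases ``are covered by Theorem~\ref{similarbyt}'', and so I expect no genuine obstacle beyond this bookkeeping.

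As an alternative that avoids re-running the induction, I could reduce directly to Theorem~\ref{extension}. Putting $T=HJ$ makes $T$ an invertible Toeplitz matrix with $\mathbf{u}\in\mbox{Ker}\{\partial T\}$, so every block $C_{t}^{i}TC_{r}^{\,j}$ of the corresponding Toeplitz extension is Toeplitz. Since a matrix $B$ is Hankel exactly when $BJ$ is Toeplitz, it remains to identify $(C_{t}^{i}H\overline{C}_{r}^{\,j})J$ with such a block. Using $HJ=T$ together with the conjugation identity $J\overline{C}_{r}J=C_{l}=C_{r}^{-1}$ — which follows from the flipping relation (\ref{flip}) and the transposition relation (\ref{trans}) via $JCJ=(C^{J})^{T}$ — one finds $(C_{t}^{i}H\overline{C}_{r}^{\,j})J=C_{t}^{i}TC_{l}^{\,j}=C_{t}^{i}TC_{r}^{-j}$, which is Toeplitz by Theorem~\ref{extension}. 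On this route the only nontrivial ingredient is the identity $J\overline{C}_{r}J=C_{l}$, which I would regard as the main (if modest) obstacle.
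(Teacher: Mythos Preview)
Your proposal is correct. In fact, your \emph{alternative} route is precisely the paper's proof: the paper simply sets $T=HJ$, notes that $T$ then satisfies the hypotheses of Theorem~\ref{extension}, and declares the corollary to follow. Your verification of the identity $J\overline{C}_{r}J=C_{l}$ via (\ref{flip}) and (\ref{trans}) is exactly the bookkeeping the paper suppresses, and it is correct: $(\overline{C}_{r})^{J}=\overline{C}_{t}$ gives $J\overline{C}_{r}J=(\overline{C}_{t})^{T}=C_{l}$.

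Your primary approach, by contrast, re-runs the induction of Theorem~\ref{extension} directly in the Hankel world using Corollary~\ref{similarbyh} and Corollary~\ref{hkershift}. This is sound and your handling of the one subtlety --- that Corollary~\ref{hkershift} speaks only of $\partial(C_{t}H)$ and $\partial(C_{b}H)$, whereas the rightward step involves $\partial(H'\overline{C}_{r})$ --- is exactly right: the similarity $C_{b}H'=H'\overline{C}_{r}$ from Corollary~\ref{similarbyh} identifies the two. What this buys you over the paper's proof is self-containment (no appeal to Theorem~\ref{extension}); what the paper's route buys is brevity, since the Toeplitz theorem has already done all the inductive work.
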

\begin{proof}
Define $T=HJ$ then $T$ satisfies the conditions of Theorem \ref{extension} and hence this Corollary follows.
\end{proof}

\section{Examples and applications}

\subsection{Examples}

For an invertible Toeplitz matrix $T$ the kernel of $\partial T$ is
$2$-dimensional and there are infinitely many choices of bases $\{\mathbf{u}%
,\mathbf{v}\}$ for $\partial T$. For any such $\mathbf{u}$ or $\mathbf{v}$ a
companion matrix can be constructed which can be used to build extensions.
Here we give three examples of obvious extensions.

\textbf{Example 1.} $\mathcal{T}[I:k,l; s, t]$. This extension is not
necessarily Toeplitz even if the starting matrix $I$ is Toeplitz, because
$\mathbf{u}$ is not in the kernel of $\partial I$ unless $u_{2}=\cdots=
u_{n}=0$. However this matrix has direct application in state evolution of a
dynamic system, as given later.

\textbf{Example 2.} $\mathcal{T}[U_{+}^{-1}:k,l;s,t]$. $U_{+}^{-1}$ is lower
triangular and we denote its elements in the first column by $s_{1}$
$\ldots$, $s_{n}$. Then $\partial U_{+}^{-1}$ has the form
\[
\partial U_{+}^{-1}:=\left[
\begin{array}
[c]{ccccc}%
s_{2} & s_{1} & 0 & \cdots & 0\\
\vdots & \ddots & \ddots & \ddots & \vdots\\
s_{n} & \cdots & s_{2} & s_{1} & 0
\end{array}
\right]
\]
and, obviously, $\mathbf{u}$ is in the kernel of $\partial U_{+}^{-1}$.
Therefore, $\mathcal{T}[U_{+}^{-1}:k,l;s,t]$ is Toeplitz. If $m=\max\{k,s\}>0$
and $h=\max\{l,t\}>0$, it can be shown that such an extension has the
following features: (a) there is a middle band consisting of $n-1$ diagonal
lines of zeros; (b) the elements below the middle band of zeros are the elements (displayed in the same order) of the inverse of the $(n+h)\times (n+h)$ lower triangular Toeplitz matrix whose first column is $[u_{1},\ldots,u_{n+1}, 0, \ldots, 0]^T$; (c) the elements above the middle band of zeros are the elements (displayed in the same order) of the inverse of the $m\times m$ upper triangular Toeplitz matrix whose first row is the truncation of the first $m$ elements of $[-u_{n+1},\ldots,-u_{1}, 0,\ldots ]$. We skip the proof of this feature but demonstrate it in the case of $k=s=n$, $t=-n$ and $l=0$. The general proof can be written down using a similar argument. We write
\[
\mathcal{T}[U_{+}^{-1}:n,0;n,-n]=\left[
\begin{array}
[c]{ccc}%
S_{1} & R_{1} & R_{2}\\
S_{2} & S_{1} & R_{1}%
\end{array}
\right]  =\left[
\begin{array}
[c]{ccc}%
S_{1} & 0 & 0\\
S_{2} & S_{1} & 0
\end{array}
\right]  +\left[
\begin{array}
[c]{ccc}%
0 & R_{1} & R_{2}\\
0 & 0 & R_{1}%
\end{array}
\right]  .
\]
It is easy to see that
\[
\left[
\begin{array}
[c]{cc}%
S_{1} & 0\\
S_{2} & S_{1}%
\end{array}
\right]  =\left[
\begin{array}
[c]{cc}%
U_{+} & 0\\
U_{-} & U_{+}%
\end{array}
\right]  ^{-1}%
\]
because $S_{1}=U_{+}^{-1}$ and, by Proposition \ref{ker}, $S_{2}U_{+}%
+S_{1}U_{-}=0$. We now show that
\[
\left[
\begin{array}
[c]{cc}%
R_{1} & R_{2}\\
0 & R_{1}%
\end{array}
\right]  =-\left[
\begin{array}
[c]{cc}%
U_{-} & U_{+}\\
0 & U_{-}%
\end{array}
\right]  ^{-1}.
\]
To see this we apply Proposition \ref{ker} to $\mathcal{T}[U_{+}%
^{-1}:n,0;n,-n]$:
\begin{align*}
0 &  =\left(  \left[
\begin{array}
[c]{ccc}%
S_{1} & 0 & 0\\
S_{2} & S_{1} & 0
\end{array}
\right]  +\left[
\begin{array}
[c]{ccc}%
0 & R_{1} & R_{2}\\
0 & 0 & R_{1}%
\end{array}
\right]  \right)  \left[
\begin{array}
[c]{cc}%
U_{+} & 0\\
U_{-} & U_{+}\\
0 & U_{-}%
\end{array}
\right]  \\
&  =\left[
\begin{array}
[c]{cc}%
S_{1} & 0\\
S_{2} & S_{1}%
\end{array}
\right]  \left[
\begin{array}
[c]{cc}%
U_{+} & 0\\
U_{-} & U_{+}%
\end{array}
\right]  +\left[
\begin{array}
[c]{cc}%
R_{1} & R_{2}\\
0 & R_{1}%
\end{array}
\right]  \left[
\begin{array}
[c]{cc}%
U_{-} & U_{+}\\
0 & U_{-}%
\end{array}
\right]  .
\end{align*}
The first term on the right hand side is equal to $I$ and hence the second
term on the right hand side is equal to $-I$.

\textbf{Example 3.} $\mathcal{T}[B_{T}^{-1}:k,l;s,t]$. Since both $\mathbf{u}$
and $\mathbf{v}$ are in the kernel of $\partial B_{T}^{-1}$ there are two
different version of this extension: $\mathcal{T}_{u}[B_{T}^{-1}:k,l;s,t]$ by
using $C_{t}(\mathbf{u})$ and $C_{r}(\mathbf{u})$, and, $\mathcal{T}_{v}%
[B_{T}^{-1}:k,l;s,t]$ by using $C_{t}(\mathbf{v})$ and $C_{r}(\mathbf{v})$.
Both $\mathcal{T}_{u}[B_{T}^{-1}:k,l;s,t]$ and $\mathcal{T}_{v}[B_{T}%
^{-1}:k,l;s,t]$ are Toeplitz and share the same central band (the diagonal
band that contains only all the diagonal lines of $B_{T}^{-1}$).

\subsection{Applications}

For an arbitrary sequence
$y=(y_{k})_{k=1}^{\infty}$ its $\lambda$-transform (generating function) is defined to be $\hat{y}(\lambda):=\sum _{k=1}^{\infty}y_{k}\lambda^{k-1}$.  Consider a linear, time-invariant, causal, discrete-time dynamic system with
transfer function description $\hat{y}(\lambda)=-(v%
(\lambda)/u(\lambda))\hat{x}(\lambda)$, where $x=(x_{k}%
)_{k=1}^{\infty}$ is the input, $y=(y_{k})_{k=1}^{\infty}$ is the output, and
the numerator and the denominator of the transfer function $-v%
(\lambda)/u(\lambda)$ satisfy all the assumptions stated in Section
1.   We will represent this system in state space form by introducing state
vectors first and then write down the rule of evolution of state vectors in
terms of the initial state and the input. Known properties of the transition
matrix will be derived as a special case of our results on extension of
Toeplitz matrices.

When both $x$ and $y$ are sequences in $ l_{1}$, the analytic functions $\hat{x}(\lambda)$ and $\hat{y}(\lambda)$ are
related by
\[
u(\lambda)\hat{y}(\lambda)=-v(\lambda)\hat{x}(\lambda).
\]
Equating like powers of $\lambda$ gives
\[
Uy+Vx=0
\]
where $x$ and $y$ are  columns and
\[
U:=\left[
\begin{array}
[c]{cccc}%
U_{+} &  &  & 0\\
U_{-} & U_{+} &  & \vspace{-2mm}\\
& U_{-} & \ddots & \\
0 &  & \ddots & \ddots
\end{array}
\right]  \quad\quad V:=\left[
\begin{array}
[c]{cccc}%
V_{+} &  &  & 0\\
V_{-} & V_{+} &  & \vspace{-2mm}\\
& V_{-} & \ddots & \\
0 &  & \ddots & \ddots
\end{array}
\right]  .
\]
It can be shown using functional analysis arguments that such $x$ and $y$ have the form
\begin{equation}
(x,y)=(-Ub,Vb)\quad\mbox{for some}\quad
b\in l_{1}. \label{xyeqn}%
\end{equation}
Writing out this equation in detail yields the difference equations
\begin{equation}
x_{k}=-u_{n+1}b_{k}-u_{n}b_{k+1}-\cdots-u_{2}b_{k+n-1}-u_{1}b_{k+n}
\label{xinput}%
\end{equation}
and
\begin{equation}
y_{k}=v_{n+1}b_{k}+v_{n}b_{k+1}+\cdots+v_{2}b_{k+n-1}+v_{1}b_{k+n}.
\label{youtput}%
\end{equation}
Now we can introduce naturally the $n$-dimensional state vector at the time $k$ as the
truncation $b_{[k;n+k-1]}$ of $b$ and denote it by
\[
\mathbf{\beta}(k)=[b_{k},b_{k+1},\ldots,b_{k+n-1}]^{T}.
\]
Then we can put (\ref{xinput}) and (\ref{youtput}) in the state space form
\begin{equation} \label{state}
\mathbf{\beta}(k+1)=C_{b}\mathbf{\beta}(k)+Bx_{k},\quad y_{k}=D
\mathbf{\beta}(k)-\frac{v_{1}}{u_{1}}x_{k}
\end{equation}
where the input matrix $B$ and the output matrix $D$ are given by
\[
B=-\left[
\begin{array}
[c]{c}%
0\\
\vdots\\
0\\
1/u_{1}%
\end{array}
\right]  \quad\mbox{and}\quad D=\frac{1}{u_{1}}\left[
\begin{array}
[c]{ccc}%
u_{1}v_{n+1}-v_{1}u_{n+1} & \cdots & u_{1}v_{2}-v_{1}u_{2}%
\end{array}
\right]  .
\]
Note that the output matrix $D$ is actually the first row of $B_{T}%
(\mathbf{u},\mathbf{v})$ divided by $u_{1}$. Using the extension given in
Example 1, the general truncation $b_{[1:n+p]}=[b_{1},\ldots,b_{n+p}%
]^{T}$ for $p>0$ is then given by
\begin{equation}
b_{[1:n+p]}=\mathcal{T}[I:0,-p;0,0]\mathbf{\beta}(0)-\frac{1}{u_{1}}\left[
\begin{array}
[c]{c}%
O_{n\times p}\\
F_{p}%
\end{array}
\right]  \left[
\begin{array}
[c]{c}%
x_{1}\\
\vdots\\
x_{p}%
\end{array}
\right]  , \label{longstate}%
\end{equation}
where
\[
F_{p}=\left[
\begin{array}
[c]{cccc}%
1 &  &  & 0\\
s_{1} & 1 &  & \\
\vdots & \ddots & \ddots & \\
s_{p-1} & \cdots & s_{1} & 1
\end{array}
\right]  ,
\]
and $s_{i}$ is the element at the last column and last row of $(C_{b})^{i}$.
It can be shown that $(1/u_{1})F_{p}$ is the inverse of the $p\times p$ lower
triangular nonsingular truncation of $U$, but we skip the proof here.

Now we change the basis of the state space by using the transforming matrix
$B_{T}$, that is, we introduce $\mathbf{\beta}^{\prime}(k)=B_{T}\mathbf{\beta}(k)$.
Then, by (\ref{similargenBt}), the state space representation of the
system is transformed into another canonical form
\begin{equation}
\mathbf{\beta}^{\prime}(k+1)=C_{l}\mathbf{\beta}^{\prime}(k)+B_{1}x_{k},\quad
y_{k}=D_{1}\mathbf{\beta}^{\prime}(k)-\frac{v_{1}}{u_{1}}x_{k} \label{state1}%
\end{equation}
where the input matrix $B_{1}$ and the output matrix $D_{1}$ now are given by
\[
B_{1}=-\frac{1}{u_{1}}\mbox{Last column of $B_T$}\quad\mbox{and}\quad
D_{1}=\frac{1}{u_{1}}\left[
\begin{array}
[c]{cccc}%
1 & 0 & \cdots & 0
\end{array}
\right]  .
\]
Then the state vector at the time $q>0$ can be expressed directly in terms of
the initial state $\mathbf{\beta}^{\prime}(0)$ and the input data:
\begin{equation}
\mathbf{\beta}^{\prime}(q)=C_{l}^{q}\mathbf{\beta}^{\prime}(0)+\mathcal{T}%
[I:0,0;0,1-q]E_{q}\left[
\begin{array}
[c]{c}%
x_{1}\\
\vdots\\
x_{q}%
\end{array}
\right]  , \label{latestate}%
\end{equation}
where $E_{q}$ is the $(n+q-1)\times q$ band matrix
\[
\left[
\begin{array}
[c]{ccc}%
B_{1} &  & 0\\
& \ddots & \\
0 &  & B_{1}
\end{array}
\right]  .
\]
Combining (\ref{longstate}) and ({\ref{latestate}) gives the mixed case,
evolving the state vectors under the basis above until a given time $q$, then changing
the basis and evolving further to the time $q+p$.
\begin{align}
b_{[q+1:n+q+p]}  &  =\mathcal{T}[I:0,-p;0,0]B_{T}^{-1}\mathbf{\beta}
^{\prime}(q)-\frac{1}{u_{1}}\left[
\begin{array}
[c]{c}%
O_{n\times p}\\
F_{p}%
\end{array}
\right]  \left[
\begin{array}
[c]{c}%
x_{q+1}\\
\vdots\\
x_{q+p}%
\end{array}
\right] \nonumber\label{longlatestate}\\
&  =\mathcal{T}[B_{T}^{-1}:0,-p;n-q,-q]\mathbf{\beta}^{\prime}(0)+\mathcal{T}%
[I:0,-p;0,1-q]E_{q}\left[
\begin{array}
[c]{c}%
x_{1}\\
\vdots\\
x_{q}%
\end{array}
\right] \nonumber\\
&  -\frac{1}{u_{1}}\left[
\begin{array}
[c]{c}%
O_{n\times p}\\
F_{p}%
\end{array}
\right]  \left[
\begin{array}
[c]{c}%
x_{q+1}\\
\vdots\\
x_{q+p}%
\end{array}
\right]
\end{align}
for all positive integers $p$ and $q$. The expression in (\ref{longlatestate})
is for demonstration only. It is not necessary in practice because there is no
need to change basis in the middle of evolution. }

All extensions involve powers of companion matrices. The formula derived in
\cite{Lim:1} can be used to calculate entries of an integer power of a
companion matrix directly and hence it is possible to calculate all required
entries of our extension directly without calculating all the powers of the
companion matrix.


\end{document}